\documentclass[11pt]{amsart}
\usepackage[english]{babel}
\usepackage[T1]{fontenc}    
\usepackage[utf8]{inputenc}  
\usepackage{fancyhdr}       
\usepackage{amsfonts}       
\usepackage{amssymb}       	
\usepackage{amsmath}        
\usepackage{amsthm}         
\usepackage{mathtools}    
\usepackage[all,cmtip]{xy}
\usepackage{mathrsfs}
\usepackage{graphicx}
\usepackage{xcolor}
\usepackage[shortlabels]{enumitem} 

\setenumerate[0]{label=(\alph*)}

\usepackage{bbm} 

\setlength{\textwidth}{\paperwidth}
\addtolength{\textwidth}{-2in}
\calclayout
\usepackage{hyperref}       
\hypersetup{
    colorlinks=true,
    linkcolor=blue,
    filecolor=blue,      
    urlcolor=blue,
}

\usepackage[multiple]{footmisc} 

\usepackage{todonotes}

\usepackage{cleveref}

\usepackage{tikz-cd} 
\usetikzlibrary{decorations.pathmorphing} 




%
%
%
%

%

%
%
%

\newtheorem{theorem}[subsubsection]{Theorem} 

\newtheorem{maintheorem}{Theorem}


\theoremstyle{definition}


\newtheorem{example}[subsubsection]{Example} 

\newtheorem{lemma}[subsubsection]{Lemma}
\newtheorem{corollary}[subsubsection]{Corollary}

\numberwithin{equation}{subsection}

\theoremstyle{remark}
\newtheorem{remark}[subsubsection]{Remark} 

\numberwithin{equation}{section}



\DeclareMathOperator{\Spec}{Spec}

\DeclareMathOperator{\tr}{Tr}
\DeclareMathOperator{\cent}{Cent}

\DeclareMathOperator{\sym}{Sym}




\DeclareMathOperator{\red}{\text{red}} 

\newcommand{\fkp}{\mathfrak{p}}

\newcommand{\gzip}{\text{$G$-$\mathtt{Zip}$}} 


\newcommand{\GL}{\mathrm{GL}} 



\newcommand{\ha}{\mathrm{Ha}} 

\usepackage{hyperref}
\hypersetup{
    colorlinks=true,
    linkcolor=blue,
    filecolor=blue,      
    urlcolor=blue,
}
\usepackage{cleveref}
 
\urlstyle{same}

\providecommand{\keywords}[1]
{
  \small	
  \textbf{\textit{Keywords---}} {#1}
}

\newcommand{\GZip}{\mathop{\text{$G$-{\tt Zip}}}\nolimits}

\renewcommand{\AA}{\mathbf{A}}

\newcommand{\FF}{\mathbf{F}}
\newcommand{\GG}{\mathbf{G}}

\newcommand{\NN}{\mathbf{N}}

\newcommand{\QQ}{\mathbf{Q}}
\newcommand{\RR}{\mathbf{R}}

\newcommand{\XX}{\mathbf{X}}

\newcommand{\ZZ}{\mathbf{Z}}

\newcommand{\Lscr}{{\mathscr L}}

\newcommand{\Vscr}{{\mathscr V}}

\newcommand{\Fcal}{{\mathcal F}}

\newcommand{\Hcal}{{\mathcal H}}
\newcommand{\Ical}{{\mathcal I}}
\newcommand{\Jcal}{{\mathcal J}}
\newcommand{\Kcal}{{\mathcal K}}

\newcommand{\Ocal}{{\mathcal O}}

\newcommand{\Ycal}{{\mathcal Y}}

\newcommand{\leftexp}[2]{{\vphantom{#2}}^{#1}{#2}}

\newcommand{\iw}{\leftexp{I}{W}}

\usepackage{csquotes}       
\usepackage[style=trad-alpha]{biblatex} 
\addbibresource{bibliography.bib} 

\title{Systems of Hecke eigenvalues on subschemes of Shimura varieties}
\author{Stefan Reppen}
\address[S. Reppen]{Graduate School of Mathematical Sciences, the University of Tokyo}
\email{stefan.reppen@gmail.com}

\makeatletter
\let\c@equation=\c@subsubsection

\let\c@figure=\c@subsubsection

\setcounter{tocdepth}{1}
\begin{document}

\begin{abstract}
We show that the systems of Hecke eigenvalues that appear in the coherent cohomology with coefficients in automorphic line bundles of any mod $p$ abelian type compact Shimura variety at hyperspecial level are the same as those appearing in any Hecke-equivariant closed subscheme. We also prove analogous results for noncompact Shimura varieties or nonclosed subschemes, such as Ekedahl-Oort strata, length strata and central leaves.
\end{abstract}
\maketitle

\keywords{Shimura varieties, Hasse invariants, Hecke actions, Ekedahl-Oort stratification, central leaves}


\section{Introduction}
The mod $p$ fiber of an abelian type Shimura variety at good reduction comes equipped with an action of the prime-to-$p$ Hecke algebra. 
Several well-studied subschemes - such as Ekedahl-Oort strata, length strata, Newton strata and central leaves - are invariant under this action. 
By considering coherent cohomology for automorphic bundles, we thus obtain several Hecke modules, and it is interesting to relate the eigenvalues that appear in these modules to those appearing in the cohomology of the Shimura variety. 
In a famous letter from Serre to Tate \cite{serre-letter.to.Tate}, Serre shows that the systems of Hecke eigenvalues that appear in the cohomology of a mod $p$ modular curve are the same as those appearing in the cohomology of the supersingular locus. This result was generalized by Goldring-Koskivirta in \cite[Theorem 11.1.1]{goldring.koskivirta-strata.hasse}. Letting $S_K$ denote the special fiber of the Kisin-Vasiu integral model of a Hodge-type Shimura variety at good reduction, the authors of ibid. show that the systems of Hecke eigenvalues appearing in the cohomology of $S_K$ are the same as those appearing in the cohomology of the unique zero-dimensional Ekedahl-Oort stratum.
\footnote{They also showed that these eigenvalues are the same as those appearing in the cohomology of the toroidal compactification of $S_K$. For the purpose of brevity, in the introduction we omit any discussion on compactifications.} As pointed out by Terakado-Yu in \cite{terakado.yu-hecke.eigensystems}, the method of ibid. can be applied to any Hecke-equivariant smooth subscheme of dimension 0 equipped with a nowhere vanishing Hecke-invariant section of the Hodge line bundle. They use this observation to prove that the systems of Hecke eigenvalues appearing in the cohomology of the Shimura variety are the same as those appearing in the cohomology of any central leaf contained in the basic Newton stratum \cite[Theorem 5.4]{terakado.yu-hecke.eigensystems}.

In the aforementioned works, the subschemes considered are smooth of dimension 0. 
In this text we prove new, analogous results for higher dimensional and possibly singular subschemes. To state the results, let $T$ be a maximal torus of the special fiber of the reductive group defining the Shimura variety. Let $L$ be the Levi subgroup associated to the special fiber of the Hodge cocharacter. Let $X_{+,L}^*(T)$ denote the $L$-dominant characters and for each $\eta\in X_{+,L}^*(T)$ let $\Vscr(\eta)$ denote the associated automorphic line bundle. 
\begin{maintheorem}\label{theorem: main theorem intro}(\Cref{theorem: main theorem.in.text})
Let $S_K$ be the special fiber of the integral canonical model of a compact abelian type\footnote{See \Cref{section: shimura datum}.} Shimura variety, at hyperspecial level $K$. For any (prime-to-$p$) Hecke-equivariant closed subscheme $Y_K\subset S_K$, the systems of Hecke eigenvalues that appear in 
\[
\bigoplus_{\eta\in X_{+,L}^*(T)}H^0(S_K,\Vscr(\eta)) \,\,\, \text{ and } \,\,\, \bigoplus_{\eta\in X_{+,L}^*(T)}H^0(Y_K,\Vscr(\eta))
\]
are the same.
\end{maintheorem}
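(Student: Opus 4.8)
The plan is to establish the two inclusions of sets of systems of Hecke eigenvalues separately. Write $\mathrm{EV}(Z)$ for the set of systems of Hecke eigenvalues occurring in $\bigoplus_{\eta\in X_{+,L}^*(T)}H^0(Z,\Vscr(\eta))$, for a Hecke-equivariant closed subscheme $Z\subseteq S_K$. Since the prime-to-$p$ Hecke algebra at hyperspecial level is commutative, $\overline{\FF}_p$ is algebraically closed, and each $H^0(Z,\Vscr(\eta))$ is finite-dimensional, $\mathrm{EV}(Z)$ coincides with the set of composition factors (equivalently, the support) of these modules; in particular it behaves well under passage to submodules and quotients and in short exact sequences. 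Two mechanisms will be used repeatedly: (i) multiplication by a Hecke-invariant section of a positive power of an automorphic line bundle which is a non-zero-divisor raises weights without changing systems of eigenvalues; and (ii) if $Z$ is closed in $S_K$ then, $\omega:=\Vscr(\eta_\omega)$ (the Hodge line bundle, $\eta_\omega$ being automatically $L$-dominant) being ample on the proper scheme $S_K$, the restriction $H^0(S_K,\Vscr(\eta)\otimes\omega^{\otimes m})\to H^0(Z,(\Vscr(\eta)\otimes\omega^{\otimes m})|_Z)$ is surjective for $m\gg 0$. The geometric inputs are the group-theoretic Hasse invariants (a Hecke-invariant section $\ha_w\in H^0(\overline{S_K^w},\omega^{\otimes n_w})$, $n_w\ge 1$, with non-vanishing locus exactly the Ekedahl--Oort stratum $S_K^w$; available for compact abelian-type Shimura varieties through the $G$-$\mathtt{Zip}$ formalism and Deligne's reduction to Hodge type) and the theorem of Goldring--Koskivirta (proved in Hodge type and valid in abelian type via Deligne's construction), which gives $\mathrm{EV}(S_K)=\mathrm{EV}(S_K^{e})$ for the minimal Ekedahl--Oort stratum $S_K^{e}$ (zero-dimensional, the superspecial locus).

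For $\mathrm{EV}(Y_K)\subseteq\mathrm{EV}(S_K)$: choose a primary decomposition $\mathcal O_{Y_K}\hookrightarrow\bigoplus_i\mathcal O_{Y_i}$ with each $Y_i$ primary of associated prime $\mathfrak{p}_i$; this is Hecke-stable since the prime-to-$p$ Hecke correspondences are finite étale at hyperspecial level and associated primes are preserved under flat base change. Each $\mathfrak{p}_i$ lies in one Ekedahl--Oort stratum $S_K^{w_i}$, so $Y_i\subseteq\overline{S_K^{w_i}}$ and $\ha_{w_i}|_{Y_i}$ is a Hecke-invariant non-zero-divisor section of $\omega^{\otimes n_{w_i}}|_{Y_i}$ (it is nonzero at the only associated point $\mathfrak{p}_i$). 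By (i), a Hecke eigenform on $Y_i$ can be moved to arbitrarily large weight keeping its system of eigenvalues, and by (ii) applied to the closed subscheme $Y_i$ it is then the restriction of a section on $S_K$; hence $\mathrm{EV}(Y_i)\subseteq\mathrm{EV}(S_K)$, and $\mathrm{EV}(Y_K)\subseteq\bigcup_i\mathrm{EV}(Y_i)\subseteq\mathrm{EV}(S_K)$.

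For $\mathrm{EV}(S_K)\subseteq\mathrm{EV}(Y_K)$ — the substantial direction — it suffices to prove the geometric claim that $S_K^{e}\subseteq Y_K$ (recall $Y_K$ is nonempty). Indeed, $S_K^{e}$ is then a closed subscheme of $Y_K$ carrying the nowhere-vanishing Hecke-invariant $\ha_{e}$, so mechanisms (i) and (ii) run inside $Y_K$ (with $\omega|_{Y_K}$ ample) show $\mathrm{EV}(S_K^{e})\subseteq\mathrm{EV}(Y_K)$, and this set is $\mathrm{EV}(S_K)$ by Goldring--Koskivirta. To prove $S_K^{e}\subseteq Y_K$: being Hecke-stable and closed, $Y_K$ contains, for each of its points $y$, the Zariski closure of the prime-to-$p$ Hecke orbit of $y$, which by the Chai--Oort Hecke-orbit theorem (in abelian type) is the closure $\overline{\mathcal C(y)}$ of the central leaf through $y$. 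As $S_K$ has finitely many Newton strata and each carries finitely many central leaves over $\overline{\FF}_p$, $Y_K$ is a finite union of central leaves, and the leaves it contains form a finite nonempty poset for the order $\mathcal C'\le\mathcal C\iff\mathcal C'\subseteq\overline{\mathcal C}$ (distinct leaves being disjoint, this is a genuine partial order). Pick a minimal element $\mathcal C_0$. Its closure $\overline{\mathcal C_0}\subseteq Y_K$ is a union of leaves, each of which is $\le\mathcal C_0$ and contained in $Y_K$, so by minimality $\overline{\mathcal C_0}=\mathcal C_0$ and $\mathcal C_0$ is proper. The Ekedahl--Oort type is constant along a central leaf, so $\mathcal C_0\subseteq S_K^{w}$ for a single $w$, whence $\ha_{w}|_{\mathcal C_0}$ is a nowhere-vanishing section of $\omega^{\otimes n_{w}}|_{\mathcal C_0}$; thus a positive power of $\omega$ is simultaneously trivial and ample on the proper scheme $\mathcal C_0$, forcing $\dim\mathcal C_0=0$. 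The unique zero-dimensional central leaf is $S_K^{e}$, so $S_K^{e}=\mathcal C_0\subseteq Y_K$.

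The step I expect to be the main obstacle is exactly the geometric claim $S_K^{e}\subseteq Y_K$ and the results it invokes — the Hecke-orbit theorem and the structure theory of central leaves (closures of leaves are unions of leaves, finiteness of leaves, isotriviality of the $p$-divisible group along a leaf) — for compact abelian-type Shimura varieties, together with the transfer of the $G$-$\mathtt{Zip}$ geometry, Hasse invariants and Hecke action from Hodge to abelian type via Deligne's construction. Once these are available, the remainder is bookkeeping around ampleness, the Hasse invariants, and non-reduced structure, the last handled by the primary decomposition. Compactness is used only to make $\omega$ ample on $S_K$ itself — for the surjectivity of restriction maps in large weight, and for concluding that a proper scheme on which some positive power of $\omega$ is trivial must be zero-dimensional; in the non-compact case one works on the minimal and toroidal compactifications.
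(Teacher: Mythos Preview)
Your approach diverges from the paper's and has gaps in both directions.

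For $\mathrm{EV}(Y_K)\subseteq\mathrm{EV}(S_K)$, the Hecke-stability of the primary decomposition is not justified: finite \'etale correspondences preserve the \emph{set} of associated primes but may permute them, and primary components attached to embedded primes are not even unique, so the individual $Y_i$ need not form Hecke-equivariant systems and the injection into $\bigoplus_i H^0(Y_i,\Vscr(\eta))$ is not a map of Hecke modules. For $\mathrm{EV}(S_K)\subseteq\mathrm{EV}(Y_K)$, your argument rests essentially on the Hecke orbit theorem --- a deep result established for Hodge type only very recently and not obviously available in the generality you need --- which the paper deliberately avoids. Two auxiliary claims are also incorrect: Newton strata typically contain infinitely many central leaves, and $S_K^e$ is not ``the unique zero-dimensional central leaf'' (every central leaf in the basic locus is zero-dimensional, and there can be many). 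The second error is patchable by invoking Terakado--Yu for your $\mathcal{C}_0$ directly rather than claiming $\mathcal{C}_0=S_K^e$, but the dependence on Hecke orbit is structural.

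The paper's proof is self-contained and uses neither central leaves nor the Hecke orbit theorem. It intersects $Y_K$ with the closure $\overline{S}_{K,i_0}$ of the smallest length stratum meeting $Y_K$, then repeatedly passes to the reduced complement of the smooth locus until what remains, $Z_K$, is smooth; by the choice of $i_0$ the length Hasse invariant $\ha_{i_0}$ is nowhere vanishing on $Z_K$. Going up from $Z_K$ to $Y_K$ and to $S_K$ is as you describe. Going down --- from $Y_K$ (resp.\ $S_K$) to $Z_K$ --- replaces your primary-decomposition step and is the key idea: given an eigenform on $Y_K$, take the largest $n$ with nonzero image in $H^0(Z_K,\Jcal^n/\Jcal^{n+1}\otimes\Vscr(\eta))$ (where $\Jcal$ is the ideal of $Z_K$ in $Y_K$), twist by a power of $\omega$ via the injective section $h=\ha_{i_0}|_{Z_K}$ to kill the relevant $H^1$, pass through the Hecke-equivariant surjection $\sym^n(\Ical/\Ical^2)\twoheadrightarrow\Jcal^n/\Jcal^{n+1}$ (with $\Ical$ the ideal of $Z_K$ in $S_K$), then through the conormal injection $\Ical/\Ical^2\hookrightarrow\Omega_{S_K}|_{Z_K}$ (this is where smoothness of $Z_K$ is used), and finally through the good filtration on $\sym^n(\Omega_{S_K}|_{Z_K})\otimes\Vscr(\eta)$ by automorphic bundles. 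No primary decomposition, no central leaves, no Hecke orbit.
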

\Cref{theorem: main theorem intro} applies in particular to the closure of all subschemes mentioned in the first paragraph of the introduction, including higher-dimensional central leaves. We also obtain results regarding nonclosed subschemes and noncompact Shimura varieties (see \Cref{theorem: general cases}). We single out the following corollary.
\begin{maintheorem}\label{theorem: second theorem intro}(\Cref{corollary: eo length strata and central leaves})
Let $S_K$ be the special fiber of the integral canonical model of an abelian type Shimura variety, at hyperspecial level $K$. Let $Y_K$ be either an Ekedahl-Oort stratum or a length stratum, and let $C_K$ be a central leaf. If $S_K$ is of Hodge type or is proper, then the systems of Hecke eigenvalues that appear in 
\[
\bigoplus_{\eta\in X_{+,L}^*(T)}H^0(S_K,\Vscr(\eta)) \,\,\, \text{ and } \,\,\, \bigoplus_{\eta\in X_{+,L}^*(T)}H^0(Y_K,\Vscr(\eta))
\]
are the same. If $S_K$ is proper and $C_K$ is closed in the Ekedahl-Oort stratum containing it, then these are the same as the systems of Hecke eigenvalues appearing in
\[
\bigoplus_{\eta\in X_{+,L}^*(T)}H^0(C_K,\Vscr(\eta)).
\]
%
\end{maintheorem}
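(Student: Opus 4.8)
The plan is to deduce the statement from \Cref{theorem: main theorem intro} (when $S_K$ is proper) and from its noncompact/nonclosed refinement \Cref{theorem: general cases} (when $S_K$ is of Hodge type) by passing to Zariski closures and then correcting for the boundary by a Hecke-invariant Hasse-type section. Write $Y_K$ for one of the subschemes in the statement (an Ekedahl--Oort stratum, a length stratum, or a central leaf $C_K$), let $\overline{Y_K}\subseteq S_K$ be its Zariski closure with its reduced structure, and $\partial Y_K:=\overline{Y_K}\setminus Y_K$. Since $Y_K$ is Hecke-stable, so is $\overline{Y_K}$, and it is a closed subscheme of $S_K$; hence \Cref{theorem: main theorem.in.text} or \Cref{theorem: general cases} already identifies the systems of Hecke eigenvalues in $\bigoplus_\eta H^0(S_K,\Vscr(\eta))$ with those in $\bigoplus_\eta H^0(\overline{Y_K},\Vscr(\eta))$. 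It then remains to compare $\overline{Y_K}$ with its dense open $Y_K$.

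One inclusion is immediate: $\overline{Y_K}$ is reduced and $Y_K$ is dense in it and meets every irreducible component, so for each $\eta\in X_{+,L}^*(T)$ the Hecke-equivariant restriction map $H^0(\overline{Y_K},\Vscr(\eta))\to H^0(Y_K,\Vscr(\eta))$ is injective, and every system of Hecke eigenvalues occurring on $\overline{Y_K}$ occurs on $Y_K$. For the reverse inclusion I want a Hecke-invariant section $f\in H^0(\overline{Y_K},\Vscr(\eta_0))$, for some $\eta_0\in X_{+,L}^*(T)$, whose zero locus is exactly $\partial Y_K$. When $Y_K$ is an Ekedahl--Oort (or length) stratum, this is the restriction to $\overline{Y_K}$ of the relevant product of partial Hasse invariants: these are pulled back from the stack of $G$-zips, hence Hecke-invariant, their total weight $\eta_0$ is $L$-dominant, and the product vanishes precisely on the lower strata, i.e.\ on $\partial Y_K$. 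When $Y_K=C_K$ is a central leaf that is \emph{closed} in the Ekedahl--Oort stratum $E_K$ containing it, one has $\overline{C_K}\cap E_K=C_K$, hence $\partial C_K=\overline{C_K}\cap(\overline{E_K}\setminus E_K)$ is the zero locus of $\ha_{E_K}|_{\overline{C_K}}$; so the restriction of the Ekedahl--Oort Hasse invariant of $E_K$ supplies the desired section. This is the one place where the hypothesis that $C_K$ be closed in its Ekedahl--Oort stratum is used.

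Given such an $f$, the open $Y_K$ is the non-vanishing locus of $f$ on $\overline{Y_K}$, so for each $\eta$
\[
H^0(Y_K,\Vscr(\eta))=\varinjlim\nolimits_n H^0\!\left(\overline{Y_K},\Vscr(\eta+n\eta_0)\right),
\]
with transition maps given by multiplication by $f$; here $\overline{Y_K}$ is quasi-compact and quasi-separated, and when $S_K$ is proper so is $\overline{Y_K}$, so each term is finite-dimensional (in the noncompact Hodge-type case one works inside a toroidal compactification to keep finiteness, as in the proof of \Cref{theorem: general cases}). Since $f$ is Hecke-invariant, the transition maps are Hecke-equivariant. If a system of Hecke eigenvalues $h$ occurs in $H^0(Y_K,\Vscr(\eta))$, an eigenvector realizing it is the image of a class at some finite level $n$; the preimage in $H^0(\overline{Y_K},\Vscr(\eta+n\eta_0))$ of the line it spans is a finite-dimensional Hecke-submodule having $h$ as a subquotient, and since the prime-to-$p$ Hecke algebra is commutative, $h$ is then realized by an honest eigenvector in $H^0(\overline{Y_K},\Vscr(\eta+n\eta_0))$. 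Hence every eigensystem on $Y_K$ occurs on $\overline{Y_K}$, and combining with the first paragraph gives the equalities claimed, including the one involving $C_K$.

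The main obstacle is precisely the reverse inclusion for central leaves: a central leaf carries no intrinsic Hasse-type invariant on its own closure, and $\partial C_K$ need not be the zero locus of any section of an automorphic bundle, so there is no direct analogue of the colimit description above. The assumption that $C_K$ be closed in the ambient Ekedahl--Oort stratum is exactly what lets one borrow the Ekedahl--Oort Hasse invariant to play that role; dropping it would require genuinely new input. A secondary, purely technical point arising only in the noncompact case is arranging the finiteness and the colimit description on a toroidal compactification, which is inherited from \Cref{theorem: general cases} together with the theory of partial Hasse invariants on compactified flag spaces.
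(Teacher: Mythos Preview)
Your proposal is correct and follows essentially the same route as the paper: reduce to the closure $\overline{Y_K}$ via \Cref{theorem: main theorem.in.text} (proper case) or \Cref{theorem: general cases}\ref{item: pullback from gzip} (Hodge-type case), then pass from $\overline{Y_K}$ to its open $Y_K$ using the strata (resp.\ length, resp.\ Ekedahl--Oort) Hasse invariant whose nonvanishing locus on $\overline{Y_K}$ is exactly $Y_K$---this is precisely the content of \Cref{theorem: general cases}\ref{item: nonvanishing locus}, and your use of the closedness-in-EO-stratum hypothesis for $C_K$ matches the paper's. Your colimit/preimage argument for the reverse inclusion is slightly more elaborate than needed: since $\overline{Y_K}$ is reduced and $Y_K$ dense, the extension of $v\cdot f^n$ to $\overline{Y_K}$ is unique and hence itself a Hecke eigenform with the same eigensystem, so no finiteness or detour through a toroidal compactification is required at that step.
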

The proofs of \cite[Theorem 11.1.1]{goldring.koskivirta-strata.hasse} and \cite[Theorem 5.4]{terakado.yu-hecke.eigensystems} proceed in two steps; a going-down argument relating eigenvalues on the Shimura variety to eigenvalues on the subschemes in question (i.e., the zero-dimensional Ekedahl-Oort stratum \cite{goldring.koskivirta-strata.hasse} or the zero-dimensional central leaves \cite{terakado.yu-hecke.eigensystems}), and a going-up argument in the other direction. A key idea in the going-up argument is to use that there is a nowhere vanishing Hasse invariant on the subscheme in question. For the going-down argument, the proofs relate Hecke eigenforms on the Shimura variety to Hecke eigenforms of the sheaf of differentials via the conormal exact sequence, building on arguments of Ghitza \cite{ghitza-hecke.eigenvalues}. Since the subschemes are smooth of dimension 0, existence of Hasse invariants is not needed in the going-down arguments. 

In this text we consider higher dimensional and possibly singular subschemes $Y_K$. One 
thus faces two problems; (1) there is no nowhere vanishing Hecke-invariant section on $Y_K$, and (2) the conormal sequence is neither exact nor consists of locally free sheaves. The key idea to surpass these problems is to perform a further going-down argument. By intersecting $Y_K$ with the smallest length stratum that it intersects nontrivially, and by successively removing the \textit{nonsingular} points of this intersection, we obtain a smooth, closed subscheme $Z_K\subset Y_K$ which admits a nowhere vanishing Hecke-equivariant section. We thereafter pass from the ideal sheaf defining $Z_K$ in $Y_K$ to the ideal sheaf defining $Z_K$ in $S_K$ and modify the going-down arguments of \cite{goldring.koskivirta-strata.hasse} to relate Hecke eigenvalues on $Y_K$ respectively $S_K$ to eigenvalues on $Z_K$. This step crucially uses the existence of Hasse invariants, in contrast to the going-down argument of ibid. Finally, the going-up argument of ibid. applies verbatim to relate eigenvalues on $Z_K$ to $Y_K$ and $S_K$ respectively.
\subsection{Outline}
\Cref{section; preliminaries} introduces notation and background, and contains proofs of some preliminary results regarding Hecke-equivariance. \Cref{section: main section} contains the proof of \Cref{theorem: main theorem intro} in \Cref{subsection: proof of thm A} and results for nonclosed subschemes and noncompact Shimura varieties in \Cref{section: proof of thm B}.
\section{Preliminaries}\label{section; preliminaries}
\subsection{Notation related to the Shimura datum}\label{section: shimura datum}
Let $(\mathbf{G},\mathbf{X})$ be a Shimura datum of abelian type and assume that $\GG=\GG^c$.
\footnote{The group $\GG^c$ is the cuspidal quotient of $\GG$ by its maximal central torus that is split over $\RR$ but anisotropic over $\QQ$. The assumption that $\GG=\GG^c$ is a technical assumption that provides a smooth surjective morphism to the stack of $G$-zips, and we will not use it elsewhere in the text. Hence, we refer to \cite[Section 3.3]{imai.kato.youcis-prismatic.realization} for the definition of $\GG^c$. For example, if $(\GG,\XX)$ is of Hodge type, then we always have that $\GG=\GG^c$. One could also drop this assumption and in the main theorems only consider automorphic bundles arising from characters of $\GG^c$.} 
For the rest of the paper, fix a prime $p\neq 2$ over which $\GG$ is unramified and let $K_p\subset \mathbf{G}(\QQ_p)$ be a hyperspecial subgroup. For each sufficiently small open compact subgroup $K^p\subset \mathbf{G}(\AA_f^p)$ let $K=K_pK^p$ and let $S_K$ denote the special fiber of the Kisin-Vasiu integral canonical model of the Shimura variety associated to $(\mathbf{G},\mathbf{X}, K)$ (\cite{kisin-hodge-type-shimura},\cite{vasiu}). The construction of the integral model is compatible with changing the prime-to-$p$ level, and for two hyperspecial subgroups $K'\subset K$ the transition map $\pi_{K'/K}\colon S_{K'}\to S_K$ is finite \'etale. For each $g\in \mathbf{G}(\AA_f^p)$ we have a natural isomorphism $g\colon S_{gKg^{-1}}\to S_K$. 
\subsection{Reminder on the theory of \texorpdfstring{$G$}{G}-zips} We give a minimal account on this theory and refer the reader to the original papers \cite{pink.wedhorn.ziegler.zip.data} and \cite{pink.wedhorn.ziegler.additional} for more details. Let $G$ be the special fiber of a reductive model of $\GG$ and let $\mu$ be the special fiber of (a representative of the conjugacy class of) an integral model of the Hodge cocharacter. Let $\kappa$ be the field of definition of $\mu$. Let $P,P^+$ be the pair of opposite parabolics determined by $\mu$, and let $L=\cent_G(\mu)$ be their Levi quotient. 
For our purposes, it suffices to say that the \textit{stack of $G$-zips of type $\mu$} $\GZip^\mu$ is a smooth algebraic $\kappa$-stack of dimension $0$ isomorphic to a quotient stack $[E\backslash G]$, where $E$ is a certain subgroup of $P\times (P^+)^{(p)}$, which acts on $G$ by conjugation. Thus, by restriction, any representation $r\colon P\to \GL(V)$ gives a representation of $E$, and thus a vector bundle $\Vscr(r)$ on $\GZip^\mu$.

If $(\GG,\XX)$ is of Hodge type, then by \cite{zhang} we have for each $K$ a smooth surjective morphism
\[
\zeta_K\colon S_K\to \GZip^\mu.
\]
The author thanks Alex Youcis for pointing out that the existence of such a $\zeta_K$ for the abelian type case follows from \cite{gardner.madapusi.mathew-moduli.of.truncated},\cite{imai.kato.youcis-prismatic.realization}, which is the sole reason for our assumption that $\GG=\GG^c$ (see also \cite[Theorem 3.4.7]{shen.zhang} for a weaker statement). 
These morphisms are compatible with the transition morphisms of the system $(S_K)_K$. 

An \textit{automorphic vector bundle} on $S_K$ is a vector bundle that is isomorphic to $\zeta_K^*\Vscr(r)$ for some representation $r\colon P\to \GL(V)$. For example, the sheaf of differentials $\Omega_{S_K}$ is the automorphic vector bundle associated to the $L$-representation $(\text{Lie}(G)/\text{Lie}(P))^{\vee}$. At one point in \Cref{lemma: good filtraation} we shall need \Cref{lemma: good filtration L-module} below regarding this $L$-module. To state it, 
fix a maximal torus $T$ and let $X_{+,L}^{*}(T)$ denote the $L$-dominant characters of $T$. 
\begin{lemma}\label{lemma: good filtration L-module}(\cite[Lemma 11.2.3]{goldring.koskivirta-strata.hasse})
    For each $\eta\in X_{+,L}^*(T)$ let $V_{\eta}$ denote the highest weight module corresponding to $\eta$. For all $n\geq 0$, the $L$-module \[
    \sym^n\Big((\text{Lie}(G)/\text{Lie}(P))^{\vee}\Big)\otimes V_{\eta}\] admits an $L$-stable filtration
    \[
    \sym^n\Big((\text{Lie}(G)/\text{Lie}(P))^{\vee}\Big)\otimes V_\eta=V_0\supset V_1\supset ... \supset V_m \supset 0
    \]
    such that $V_i/V_{i+1}\cong V_{\eta_i}$ for some $\eta_i\in X_{+,L}^*(T)$.
\end{lemma}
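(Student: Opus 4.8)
The plan is to recast the statement as a claim about \emph{good filtrations} of modules over the reductive group $L$. First normalise $V_\eta$ to be the induced (costandard) $L$-module $\mathrm{Ind}_{B_L}^{L}(\eta)$ of highest weight $\eta$; if one's convention instead takes $V_\eta$ to be the Weyl module, apply everything below to the contragredient modules, the two conventions being exchanged by $M\mapsto M^{\vee}$ and $\eta\mapsto -w_{0,L}\eta$. With this normalisation, a filtration of $\sym^n\!\big((\mathrm{Lie}(G)/\mathrm{Lie}(P))^{\vee}\big)\otimes V_\eta$ with graded pieces of the form $V_{\eta_i}$, $\eta_i\in X_{+,L}^{*}(T)$, is by definition a good filtration of that module. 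Write $W:=(\mathrm{Lie}(G)/\mathrm{Lie}(P))^{\vee}$; since $L=\cent_G(\mu)$ and $P$ is the parabolic attached to $\mu$, the $L$-module $W$ is isomorphic to the Lie algebra $\mathfrak u$ of the unipotent radical of $P$ (up to the irrelevant choice of which of the two opposite parabolics one calls $P$), and $\bigoplus_{n\ge 0}\sym^n W$ is the coordinate ring of the affine $L$-space $\mathfrak u^{\vee}$.

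I would then assemble the lemma from two inputs. The first is the tensor product theorem for good filtrations (Donkin, Mathieu): over any field, if two finite-dimensional $L$-modules each admit a good filtration, then so does their tensor product --- in particular over our base field, with $p\ne 2$. The second, which carries the actual content, is that $\sym^n W$ admits a good $L$-filtration for every $n\ge 0$. Granting both, the lemma follows at once: $V_\eta$ is costandard, hence trivially has a good filtration, so $\sym^n W\otimes V_\eta$ does too, and undoing the normalisation produces the filtration asserted.

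It remains to prove that $\sym^n W$ has a good $L$-filtration. Since we are in a Shimura setting, $\GG$ --- and hence $G$ and $L$ --- is of classical type and the Hodge cocharacter $\mu$ is minuscule, so $\mathfrak u$ is abelian and $W$ is, as an $L$-module, a twist of one of the classical modules: a tensor product $V_1\otimes V_2$ of standard modules of general linear factors of $L$, a symmetric or exterior square of a standard module, or the natural module of an orthogonal factor. For each of these, $\sym^n$ carries a good filtration, furnished by the integral Cauchy-type and Littlewood-type resolutions of Akin--Buchsbaum--Weyman (and their orthogonal analogues), whose graded pieces are Schur functors of the $V_i$ --- and these Schur functors are precisely the costandard modules of the relevant general linear (respectively orthogonal) groups, so the resolution specialises to a good filtration of $\sym^n W$ in every characteristic, the only exception being $p=2$ for the symmetric- and exterior-square cases. (Alternatively, and with no hypothesis on $G$ or $\mu$: the symmetric algebra $\sym^\bullet\mathfrak u$ of the nilradical of a parabolic carries a good $L$-filtration compatible with its grading, as can be deduced from the Frobenius splitting of cotangent bundles of flag varieties, or from the computations of Andersen--Jantzen and Broer on line bundles over $T^{*}(G/P)$.)

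The hard part is exactly this second input: exhibiting the good filtration of $\sym^\bullet\mathfrak u$ over $L$, uniformly in the characteristic. The tensor product theorem and the formal reductions above cost nothing, but the good-filtration property of the symmetric algebra is where one must bring in either the geometry of $T^{*}(G/P)$ or the characteristic-$p$ combinatorics of Schur functors, and it is also the origin of the restriction to $p\ne 2$, which is already forced by the symmetric- and exterior-square (symplectic and orthogonal) cases.
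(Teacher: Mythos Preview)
The paper does not give its own proof of this lemma; it simply records the statement and cites \cite[Lemma~11.2.3]{goldring.koskivirta-strata.hasse}. So there is nothing in the present paper to compare your argument against beyond that citation.

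Your proposal is a correct outline of how one proves the cited result. Recasting the claim as the assertion that $\sym^n W\otimes V_\eta$ has a good filtration, and reducing via Mathieu's tensor product theorem to the single input that $\sym^n\mathfrak u$ has a good $L$-filtration, is exactly the right structure. Both of your suggested routes to that input are legitimate: the case-by-case analysis for minuscule $\mu$ via Schur-functor resolutions, and the uniform argument through the geometry of $T^*(G/P)$ (Andersen--Jantzen, or Frobenius splitting). The latter is cleaner since it avoids any classification, and it is essentially what underlies the Goldring--Koskivirta proof as well. One small point: your identification $W\cong\mathfrak u$ is correct but deserves a word --- $\mathrm{Lie}(G)/\mathrm{Lie}(P)\cong\mathfrak u^-$ as $L$-modules, and then $(\mathfrak u^-)^\vee\cong\mathfrak u$ via the pairing coming from the Killing form (or, in characteristic~$p$, its integral avatar).
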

\textit{The Hodge line bundle} $\omega\coloneqq \det\Omega_{S_K}$ is an automorphic vector bundle. By abuse of notation, we denote by $\omega$ also the line bundle on $\GZip^\mu$ whose pullback along $\zeta_K$ is $\det\Omega_{S_K}$.
\subsubsection{Ekedahl-Oort and length stratifications}
Fix a Borel subgroup $B$ with $T\subset B$, whose $\kappa$-fiber is contained in $P$. Let $W$ denote the Weyl group of $(G,T)$ and let $\Delta$ denote the simple roots corresponding to $B$. 
Let $I\subset \Delta$ be the subset corresponding to $P$ and let ${}^IW$ denote the set of minimal length representatives of $W_I\backslash W$, where $W_I\subset W$ is the subgroup generated by $I$. There exists a partial order $\preceq$ on $\iw$ 
which induces a topology on $\iw$. Let $z$ denote the element of maximal length in $\iw$. For each $w\in \iw$, let $G_w$ be the $E$-orbit of of $wz^{-1}$. The map $w\mapsto G_w$ induces a homeomorphism of $\iw$ with the underlying topological space of $\GZip^\mu$. The \textit{Ekedahl-Oort stratification} is the corresponding stratification
\[
\gzip^\mu = \bigcup_{w\in \iw} [E\backslash G_w].
\]
The closure relation is given by
\begin{equation*}
[E\backslash \overline{G}_w] = \bigcup_{w'\preceq w} [E\backslash G].
\end{equation*}
Let $l\colon W\to \NN$ denote the length function. For each $j\in \NN$, let 
\[
G_j\coloneqq \bigcup_{\{w\in \iw:l(w)=j\}}G_w.
\]
The \textit{length $j$ stratum} of $\GZip^\mu$ is the substack $[E\backslash G_j]$. One obtains thus the \textit{length stratification} of $\GZip^\mu$ and the closure of the length $j$ stratum is given by
\begin{equation}\label{equation: closure length stratum}
[E\backslash \overline{G}_j] = \bigcup_{i\leq j} [E\backslash G_i].
\end{equation}
Since the morphisms $\zeta_K\colon S_K\to \GZip^\mu$ are smooth and surjective, we obtain via pullback the Ekedahl-Oort and length stratifications on $S_K$.
\subsubsection{Hasse invariants}
We shall make repeated use of the following results of \cite{goldring.koskivirta-strata.hasse}.
\begin{lemma}\label{lemma: eo Hasse invariant}(\cite[Theorem 3.2.3]{goldring.koskivirta-strata.hasse})
    There exists an integer $r\geq 1$ and for each $w\in \iw$ a section $\ha_w$ of $H^0([E\backslash \overline{G}_w],\omega^r)$ whose nonvanishing locus is exactly $[E\backslash G_w]$.
    
\end{lemma}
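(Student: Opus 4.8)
The plan is to construct the sections $\ha_w$ on the flag stack $\GF^\mu$, where the strata are cut out by explicit products of partial Hasse invariants, and then to descend them along the projection $\pi\colon\GF^\mu\to\GZip^\mu$. First I would recall the dictionary between sections and equivariant functions: writing $\GZip^\mu=[E\backslash G]$, the Hodge line bundle $\omega$ is attached to a character $\chi$ of $E$ (which factors through the Levi $L$), so that for any $E$-stable locally closed substack $[E\backslash Z]$ and any $n$,
\[
H^0\big([E\backslash\overline Z],\omega^{n}\big)=\{f\in\Oscr(\overline Z):f(\epsilon\cdot g)=\chi(\epsilon)^{n}f(g)\text{ for all }\epsilon\in E,\ g\in\overline Z\},
\]
and $f$ is the function attached to a section iff their vanishing loci coincide. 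Thus it suffices to produce, for each $w\in\iw$, a $\chi^{r}$-equivariant regular function on $\overline G_w$ with zero locus exactly $\overline G_w\setminus G_w$.

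Next, I would use the $G$-zip flag stack $\pi\colon\GF^\mu\to\GZip^\mu$: a smooth proper morphism with fibers isomorphic to the flag variety $P/B\cong L/(B\cap L)$, so that $\pi_*\Oscr=\Oscr$ and $R^{i}\pi_*\Oscr=0$ for $i>0$. It carries a refined, $W$-indexed stratification $\{\Fscr_w\}_{w\in W}$, with closure relations given by a Bruhat-type order, such that for $w\in\iw$ the map $\pi$ restricts to a proper surjection $\overline{\Fscr}_w\to[E\backslash\overline G_w]$ which is an isomorphism over the open stratum $[E\backslash G_w]$, with connected fibers whose structure sheaf has no higher cohomology (the boundary fibers being Schubert varieties). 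In particular the projection formula gives $H^0(\overline{\Fscr}_w,\pi^*\call)\cong H^0([E\backslash\overline G_w],\call)$ for every line bundle $\call$ on $[E\backslash\overline G_w]$.

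On $\GF^\mu$ one has, for each simple root $\alpha\in\Delta$, a partial Hasse invariant $h_\alpha$: a section of an explicit line bundle $\call_\alpha$ whose non-vanishing locus is a union of fine strata characterized by a simple-reflection condition. These are the zip-theoretic analogues of the divisors on $P/B$ recording relative position with Frobenius, and their construction is local on $\GZip^\mu$, reducing to a computation on the flag variety. Given $w\in\iw$ and a reduced expression $w=s_{\alpha_1}\cdots s_{\alpha_\ell}$, I would form a product $H_w\coloneqq\prod_i h_{\alpha_i}^{m_i}$ with exponents dictated by the combinatorics, and an induction on the Bruhat order shows that $H_w|_{\overline{\Fscr}_w}$ has non-vanishing locus exactly $\Fscr_w$. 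The delicate point is to arrange the exponents $m_i$, and possibly an overall power, so that the line bundle $\bigotimes_i\call_{\alpha_i}^{m_i}$ becomes isomorphic to $\pi^*\omega^{r_w}$ for some $r_w\ge1$; equivalently, so that the relative $\Oscr(1)$ of the flag fibration cancels. This weight bookkeeping is the technical heart of the argument.

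Finally, by the projection formula $H_w$ is the pullback of a unique section $\ha_w$ of $\omega^{r_w}$ on $[E\backslash\overline G_w]$, and since $\pi|_{\overline{\Fscr}_w}$ is proper, surjective and an isomorphism over $[E\backslash G_w]$, the non-vanishing locus of $\ha_w$ is exactly $[E\backslash G_w]$. As $\iw$ is finite, putting $r\coloneqq\operatorname{lcm}_{w}r_w$ and replacing $\ha_w$ by $\ha_w^{\,r/r_w}$ gives a single $r\ge1$ valid for all $w$. The main obstacle will be the weight computation: forcing the product of partial Hasse invariants to descend to a power of $\omega$ while keeping its non-vanishing locus equal to the single open fine stratum; the remaining steps are formal consequences of properness of $\pi$ and the structure of the flag stack.
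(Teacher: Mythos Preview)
The paper gives no proof of this lemma; it is stated purely as a citation to \cite[Theorem 3.2.3]{goldring.koskivirta-strata.hasse}, so there is nothing in the paper to compare your sketch against.

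Relative to the cited reference, your outline is in the right spirit---the flag stack $\GF^\mu$ and descent along $\pi$ are indeed the essential tools---but the mechanism you describe is not quite how Goldring--Koskivirta proceed. They do not build $\ha_w$ by taking a product $\prod_i h_{\alpha_i}^{m_i}$ over a reduced word for $w$: the codimension-one ``partial Hasse invariants'' cut out divisors on the \emph{full} flag stack, and a naive product would not in general have vanishing locus equal to $\overline{\Fscr}_w\setminus\Fscr_w$ nor land in $\pi^*\omega^{r}$. Their actual argument is an induction through a tower of intermediate zip-flag stacks, together with a group-theoretic criterion (that the Hodge character is ``orbitally $p$-close'', hence lies in the relevant cone of characters admitting sections on every stratum closure). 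The descent step also uses normality of the Schubert-type strata closures on the flag side rather than just $\pi_*\Ocal=\Ocal$. You correctly flag the weight bookkeeping as the crux, but the resolution is this cone/orbitally-$p$-close machinery rather than a direct exponent-matching for a product of the $h_\alpha$.
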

\begin{lemma}\label{lemma: length Hasse invariant}(\cite[Proposition 5.2.1]{goldring.koskivirta-strata.hasse})
    There exists an integer $r\geq 1$ and for each $j$ a section $\ha_j$ of $H^0([E\backslash \overline{G}_j],\omega^r)$ whose nonvanishing locus is exactly $[E\backslash G_j]$.
\end{lemma}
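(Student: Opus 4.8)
The plan is to assemble $\ha_j$ from the Ekedahl--Oort Hasse invariants provided by \Cref{lemma: eo Hasse invariant}. Fix $j\geq 0$ and put $W_j=\{w\in\iw:l(w)=j\}$, a finite set, so that $[E\backslash G_j]=\bigsqcup_{w\in W_j}[E\backslash G_w]$; by \eqref{equation: closure length stratum} one has $[E\backslash\overline{G}_j]=\bigsqcup_{i\leq j}[E\backslash G_i]$, with closed complement $[E\backslash\overline{G}_j]\setminus[E\backslash G_j]=[E\backslash\overline{G}_{j-1}]=\bigsqcup_{i<j}[E\backslash G_i]$. In particular $[E\backslash G_j]$ is open in $Z:=[E\backslash\overline{G}_j]$, and since the elements of $W_j$ are pairwise $\preceq$-incomparable, the closure $[E\backslash\overline{G}_w]$ of each $[E\backslash G_w]$ ($w\in W_j$) meets $[E\backslash G_j]$ in $[E\backslash G_w]$ alone, so each $[E\backslash G_w]$ is itself open in $Z$. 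I work with reduced structures throughout.

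The first step is to extend, for each $w\in W_j$, the section $\ha_w\in H^0([E\backslash\overline{G}_w],\omega^{r})$ of \Cref{lemma: eo Hasse invariant} (whose nonvanishing locus is exactly $[E\backslash G_w]$) to a section $\widetilde{\ha}_w$ of a fixed power $\omega^{rN}$ on all of $Z$ that vanishes identically outside $[E\backslash G_w]$. Write $Z_1=[E\backslash\overline{G}_w]$ and $Z_2=Z\setminus[E\backslash G_w]$, a reduced closed substack (as $[E\backslash G_w]$ is open), so that $Z=Z_1\cup Z_2$ and the scheme-theoretic intersection $Z_1\cap Z_2$ is supported on $[E\backslash\overline{G}_w]\setminus[E\backslash G_w]$, where $\ha_w$ vanishes at every point. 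Since a section of a line bundle vanishing at every point of a Noetherian scheme is nilpotent, $\ha_w^{N}$ restricts to the zero section of $\omega^{rN}$ on $Z_1\cap Z_2$ for $N$ large; and $N$ may be chosen independently of $w$ and $j$, as $\iw$ is finite. Then $(\ha_w^{N},0)$ lies in the kernel of $H^0(Z_1,\omega^{rN})\oplus H^0(Z_2,\omega^{rN})\to H^0(Z_1\cap Z_2,\omega^{rN})$, so applying $H^0$ to the exact sequence obtained by tensoring $0\to\calo_Z\to\calo_{Z_1}\oplus\calo_{Z_2}\to\calo_{Z_1\cap Z_2}\to 0$ with the line bundle $\omega^{rN}$ yields the required $\widetilde{\ha}_w\in H^0(Z,\omega^{rN})$ with $\widetilde{\ha}_w|_{Z_1}=\ha_w^{N}$ and $\widetilde{\ha}_w|_{Z_2}=0$. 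Its nonvanishing locus is exactly $[E\backslash G_w]$: it is nonvanishing there because $\ha_w$ is, and it vanishes on $Z_1\setminus[E\backslash G_w]$ and on $Z_2$, which together exhaust $Z\setminus[E\backslash G_w]$.

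The second step is to set $\ha_j:=\sum_{w\in W_j}\widetilde{\ha}_w\in H^0([E\backslash\overline{G}_j],\omega^{rN})$ and to check its nonvanishing locus pointwise. Every point of $[E\backslash\overline{G}_j]$ lies in a unique Ekedahl--Oort stratum $[E\backslash G_v]$ with $l(v)\leq j$. If $l(v)=j$, so $v\in W_j$, then $\widetilde{\ha}_v$ is nonvanishing at the point while $\widetilde{\ha}_w$ vanishes there for every other $w\in W_j$ (since $[E\backslash G_w]\cap[E\backslash G_v]=\emptyset$), whence $\ha_j$ is nonvanishing at the point. If $l(v)<j$, then every summand vanishes at the point, whence $\ha_j$ vanishes there. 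Therefore the nonvanishing locus of $\ha_j$ is $\bigsqcup_{w\in W_j}[E\backslash G_w]=[E\backslash G_j]$, and taking $rN$ for the integer in the statement (which works uniformly in $j$) completes the proof.

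The step I expect to be the main obstacle is the gluing in the first step: one must be careful that $\ha_w$ restricts to the genuine zero section of $\omega^{rN}$ on the scheme-theoretic overlap $Z_1\cap Z_2$, rather than merely to a section vanishing at every point — this is precisely what forces raising $\ha_w$ to a power and choosing that power uniformly. Everything else is formal, given \Cref{lemma: eo Hasse invariant} and the structure of the length stratification recorded in \eqref{equation: closure length stratum}.
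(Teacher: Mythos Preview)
The paper does not give its own proof of this lemma: it is simply quoted from \cite[Proposition~5.2.1]{goldring.koskivirta-strata.hasse}, so there is nothing in the present paper to compare against directly. That said, your argument is the natural one and matches the strategy of the cited reference: build the length Hasse invariant by extending each Ekedahl--Oort Hasse invariant $\ha_w$ (for $l(w)=j$) by zero across the rest of $[E\backslash\overline{G}_j]$ and summing.

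Your proof is correct. A couple of small points worth making explicit. First, the claim that distinct $w,w'\in W_j$ are $\preceq$-incomparable (equivalently that $[E\backslash\overline{G}_w]\cap[E\backslash G_j]=[E\backslash G_w]$) is not literally stated in this paper; it follows from the fact that $\dim G_w=l(w)$, so a strict specialization $w'\prec w$ forces $l(w')<l(w)$. Second, the Mayer--Vietoris sequence $0\to\calo_Z\to\calo_{Z_1}\oplus\calo_{Z_2}\to\calo_{Z_1\cap Z_2}\to 0$ you invoke is exact precisely because you take $Z$ reduced and $Z_1\cup Z_2=Z$ set-theoretically, so $I_{Z_1}\cap I_{Z_2}=\nilrad(\calo_Z)=0$; your remark that you ``work with reduced structures throughout'' is doing real work here. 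Finally, your caution about raising to a power $N$ to kill the restriction on the possibly nonreduced $Z_1\cap Z_2$ is exactly right, and the uniform choice of $N$ is justified by finiteness of $\iw$.
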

The sections in the previous lemmas are called the \textit{(Ekedahl-Oort) strata Hasse invariants} respectively the \textit{length Hasse invariants}. We obtain via pullback along $\zeta_K$ the corresponding Hasse invariants on $S_K$.
\subsection{Reminder on central leaves}
Since we will not work explicitly with the central leaves, we simply sketch their definition in the Hodge type case. For details and definitions in the case of abelian type, see \cite[Section 4]{shen.zhang}.

Let $A_K\to S_K$ denote the universal abelian variety over $S_K$. For any $x\in S_K(\overline{\kappa})$, the \textit{central leaf} of $x$, denoted $C(x)$, is the set of points $y\in S_K(\overline{\kappa})$ such that there is an isomorphism of Dieudonn\'e modules $\mathbb{D}(A_{K,x}[p^\infty])(W(\overline{\kappa}))\cong \mathbb{D}(A_{K,y}[p^\infty])(W(\overline{\kappa}))$. Each central leaf is locally closed in $S_{K,\overline{\kappa}}$. Since the Ekedahl-Oort strata parametrizes the $p$-torsion $A_K[p]$ and Newton strata parametrizes isogeny classes of the $p$-divisible groups $A_{K}[p^\infty]$, we see that any central leaf is contained in a unique Ekedahl-Oort stratum and a unique Newton stratum. The dimension of $C(x)$ has an explicit combinatorial description in terms of the Newton map, which we omit here. We point out that central leaves in the unique Newton stratum of minimal dimension are smooth and zero-dimensional.
\subsection{\texorpdfstring{$\GG(\AA_f^p)$}{G}-equivariance and the Hecke action}\label{section: equivariance}
Let $(\Box_K)_K$ be a system of sheaves on $(S_K)_K$ or a system of subschemes of $(S_K)_K$. 
We say that $(\Box_K)_K$ is \textit{$\GG(\AA_f^p)$-equivariant} if $\pi_{K'/K}^*\Box_K=\Box_{K'}$ and $g^*\Box_K=\Box_{gKg^{-1}}$ for all $K'\subset K$ and all $g\in \GG(\AA_f^p)$. Equivalently, $\Box_K$ is $\GG(\AA_f^p)$-equivariant if its pullback to $\varprojlim_{K^p}S_{K_pK^p}$ along the projection morphism is $\GG(\AA_f^p)$-equivariant. 

Similarly a $\GG(\AA_f^p)$-equivariant morphism between such $\GG(\AA_f^p)$-equivariant systems is said to be $\GG(\AA_f^p)$-equivariant if it is compatible with $\pi_{K'/K}$ and $g$ for all $K'\subset K$ and all $g\in \GG(\AA_f^p)$.

We also refer to $\GG(\AA_f^p)$-equivariant objects as \textit{Hecke-stable}, or \textit{Hecke-equivariant}.
\begin{example}\label{example: equiv systems}
    \begin{enumerate}
        \item\label{item: eo strata} Ekedahl-Oort strata and length strata are $\GG(\AA_f^p)$-equivariant. More generally, if $\Box$ is any substack of $\GZip^\mu$ or a sheaf on it, we obtain a $\GG(\AA_f^p)$-equivariant system $(\Box_K)_K$ via pullback, $\Box_K\coloneqq \zeta_K^*\Box$. In particular, automorphic vector bundles are $\GG(\AA_f^p)$-equivariant.
        \item\label{item: newton strata} Newton strata are $\GG(\AA_f^p)$-equivariant.
        \item\label{item: central leaves} The central leaf of any point $x\in S_K(\overline{\FF}_p)$ is $\GG(\AA_f^p)$-equivariant. 
    \end{enumerate}
\end{example}
\subsubsection{Definition of the Hecke action}
Let $\text{Ram}(\GG)$ denote the set of places at which $\GG$ ramifies. For each $\nu \in \text{Ram}(\GG)\cup\{p\}$ and each hyperspecial subgroup $\Kcal_\nu\subset \GG(\QQ_\nu)$, let $\Hcal_\nu=\Hcal_\nu(\GG_\nu,\Kcal_\nu,\ZZ_p)$ be the unramified Hecke algebra of $\GG$ at $\nu$, with $\ZZ_p$ coefficients. Let $\Hcal$ denote the unramified, prime-to-$p$ global Hecke algebra, defined as the restricted tensor product
\[
\Hcal\coloneqq \bigotimes_{\nu \in \text{Ram}(\GG)\cup\{p\}} \Hcal_\nu.
\]

For $K=K_pK^p$ as in \Cref{section: shimura datum}, and $g\in \GG(\QQ_p)$, let $K'=K\cap  gKg^{-1}$ and consider the diagram
\begin{equation*}
    \begin{tikzcd}
        & S_{K'} \arrow[ld, "\pi_{K'/K}"'] \arrow[rd, "\pi_{K'/gKg^{-1}}"] & & \\
        S_K & & S_{gKg^{-1}} \arrow[r, "g"] & S_K
    \end{tikzcd}
\end{equation*}
Suppose that $(\Fcal_K)_K$ is a Hecke-equivariant system of coherent sheaves on $(S_K)_K$. 
To each $g\in \GG(\AA_f^p)$ we associate the Hecke operator $T_g$, which acts on $H^0(S_K,\Fcal_K)$ by push-pull
\[
T_g \coloneqq \tr(g\circ \pi_{K'/gKg^{-1}})\circ \pi_{K'/K}^*.
\]
Since $\Hcal_\nu$ is generated by characteristic functions of double cosets represented by $g\in G(\QQ_\nu)$ we obtain an action of $\Hcal$ on $H^0(S_K,\Fcal)$. This is called the \textit{Hecke-action}.

Given a field $\kappa$ and an $\Hcal$-module $M$ which is a finite dimensional vector space over $\kappa$, we say that a system of Hecke eigenvalues $(b_T)_T$ appears in $M$ if there is some finite extension $\kappa'/\kappa$ and $m\in M\otimes \kappa'$ such that $T\cdot m=b_T m$ for all $T\in \Hcal$.
\begin{remark}
If $\varphi\colon M\to M'$ is a surjective (resp. injective) morphism of $\Hcal$-modules, then every system of Hecke eigenvalues that appears in $M'$ (resp. $M$) also appears in $M$ (resp. $M'$) (cf. \cite[Lemma 8.3.1]{goldring.koskivirta-strata.hasse}).
\end{remark}
\subsubsection{Some results regarding \texorpdfstring{$\GG(\AA_f^p)$}{G}-equivariance}
%
%
%
%
%
%
For a reduced scheme $X$ over $\kappa$, let $X^{\text{sm}}$ denote the maximal smooth open subscheme of $X$. 
\begin{lemma}\label{lemma: equivariant stuff}
Let $(Y_K)_K$ be a system of $\GG(\AA_f^p)$-equivariant subschemes. We have the following:
\begin{enumerate}
    \item\label{item: closures} The closure $\overline{Y}_K$ is $\GG(\AA_f^p)$-equivariant.
    \item\label{item: intersection} If $Y_K$ is closed and $Z_K$ is another $\GG(\AA_f^p)$-equivariant closed subscheme, then $Z_K\cap Y_K$ is $\GG(\AA_f^p)$-equivariant.
    \item\label{item: reduced subscheme} If $Y_K$ is closed, the reduced subscheme $(Y_K)_{\red}$ is $\GG(\AA_f^p)$-equivariant.
    \item\label{item: smooth locus} If $Y_K$ is closed and reduced, then $Y_K^{\text{sm}}$ is $\GG(\AA_f^p)$-equivariant.
    \item\label{item: complement} If $Y_K$ is closed and $U_K\subset Y_K$ is an open $\GG(\AA_f^p)$-equivariant subscheme, then $Z_K=Y_K\setminus U_K$ with its reduced induced scheme structure is $\GG(\AA_f^p)$-equivariant.
\end{enumerate}
\end{lemma}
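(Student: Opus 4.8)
The plan is to reduce everything to two basic facts: that the formation of the operations in question (closure, scheme-theoretic intersection, reduced subscheme, smooth locus, complement) commutes with flat base change, and that the morphisms $\pi_{K'/K}$ and $g\colon S_{gKg^{-1}}\to S_K$ are flat — indeed $\pi_{K'/K}$ is finite étale and $g$ is an isomorphism, so both are flat with geometrically regular fibers. Given this, for each operation $\mathrm{Op}$ one verifies $\pi_{K'/K}^*(\mathrm{Op}(Y_K)) = \mathrm{Op}(\pi_{K'/K}^*Y_K) = \mathrm{Op}(Y_{K'})$ and similarly $g^*(\mathrm{Op}(Y_K)) = \mathrm{Op}(g^*Y_K) = \mathrm{Op}(Y_{gKg^{-1}})$, using $\GG(\AA_f^p)$-equivariance of $(Y_K)_K$ (and of $(Z_K)_K$, $(U_K)_K$) in the last equality. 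So the content is the compatibility of each operation with pullback along a flat morphism $f\colon S'\to S$ which is moreover an isomorphism or finite étale.

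I would then go through the five items in order. For \eqref{item: closures}: the scheme-theoretic closure of $Y_K$ is cut out by the kernel of $\Ocal_{S_K}\to (j_K)_*\Ocal_{Y_K}$ where $j_K$ is the inclusion; since $f$ is flat and the $S_K$ are Noetherian, $f^*$ commutes with the pushforward $j_*$ and with taking kernels, so $f^*\overline{Y}_K = \overline{f^*Y_K} = \overline{Y}_{K'}$ (resp. $\overline{Y}_{gKg^{-1}}$). For \eqref{item: intersection}: scheme-theoretic intersection corresponds to the sum of ideal sheaves, and $f^*$ is exact and takes $\Iscr_{Y_K}+\Iscr_{Z_K}$ to $\Iscr_{Y_{K'}}+\Iscr_{Z_{K'}}$. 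For \eqref{item: reduced subscheme}: the key point is that formation of the nilradical commutes with pullback along a morphism with geometrically reduced (here geometrically regular) fibers — equivalently, $f^*(\Ocal_{S_K})_{\red}$ is reduced because $S_{K'}\to S_K$ is regular, hence $(Y_{K'})_{\red} = f^*(Y_K)_{\red}$; one can cite the relevant statement from the Stacks Project on reducedness and flat base change. For \eqref{item: smooth locus}: smoothness is fppf-local on the source and, since $f$ is flat with regular fibers (and $Y_K$ reduced), $Y_{K'} = f^*Y_K$ is regular exactly over the preimage of the regular locus of $Y_K$; over a field this regular locus is the smooth locus, and it is open because the $Y_K$ are excellent (finite type over a field), so $f^{-1}(Y_K^{\mathrm{sm}}) = Y_{K'}^{\mathrm{sm}}$. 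For \eqref{item: complement}: $Z_K = Y_K\setminus U_K$ with reduced structure; its underlying space pulls back correctly since $f^{-1}$ of a set-theoretic complement is the complement of the preimage, and the reduced structure pulls back correctly by \eqref{item: reduced subscheme} applied on the open $Y_K$ (or directly, since $f$ is regular).

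The step I expect to be the main obstacle is \eqref{item: smooth locus} — and the reducedness input \eqref{item: reduced subscheme} it rests on — because there one genuinely needs that $\pi_{K'/K}$ and $g$ are \emph{regular} morphisms (geometrically regular fibers), not merely flat: flatness alone does not preserve reducedness or the smooth locus under pullback. Fortunately $g$ is an isomorphism and $\pi_{K'/K}$ is finite étale, so this is immediate, but it is worth stating explicitly. A minor subtlety throughout is to keep the distinction between $\GG(\AA_f^p)$-equivariant systems indexed by $K$ and the single morphisms $f$ appearing in the definition; since the $\zeta_K$ and transition maps are compatible, and the $g$-isomorphisms satisfy the evident cocycle condition, no extra coherence check is needed beyond the pointwise verification above. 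Finally, all schemes involved are Noetherian and of finite type over a field, so statements like "$f^*$ commutes with $j_*$" and "the smooth locus is open" hold without further hypotheses.
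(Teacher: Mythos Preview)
Your proposal is correct and follows essentially the same strategy as the paper: both exploit that the transition maps $\pi_{K'/K}$ are finite \'etale (and $g$ is an isomorphism) to verify that each operation commutes with pullback. Your framing via flat base change is slightly more systematic than the paper's hands-on arguments---for instance, in \ref{item: closures} you use the scheme-theoretic closure via $\ker(\Ocal_{S_K}\to (j_K)_*\Ocal_{Y_K})$ and flat base change for pushforwards, whereas the paper argues directly with the reduced closure through its universal property---but these are minor differences in execution rather than substance.
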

\begin{proof}
For \ref{item: closures}, let $Z_K\coloneqq \overline{Y}_K$ and let $\pi \coloneqq \pi_{K'/K}\colon S_{K'}\to S_K$ be a transition map. Since $\pi$ is finite and \'etale, $\pi(Z_{K'})=Z_K$ as closed subsets of $S_K$. Hence, if $X$ is an arbitrary reduced scheme and $f\colon X\to S_{K'}$ is a morphism with image contained in $Z_{K'}$, then $\pi\circ f$ has image in $Z_K$. By the universal property of $Z_K$ with the reduced subscheme structure on $\overline{Y}_K$, we obtain a commutative diagram
\[
\begin{tikzcd}
    X \arrow[rrd, bend left = 30, "f"] \arrow[ddr, bend right = 30] & & \\
     & Z_K \times_{S_K} S_{K'} \arrow[r] \arrow[d] & S_{K'} \arrow[d] \\
     & Z_K \arrow[r] & S_K.
\end{tikzcd}
\]
By the universal property of $Z_K\times_{S_K} S_{K'}$ as a fiber product, we see that it satisfies the universal property of $Z_{K'}$. This shows \ref{item: closures}.

The other assertions follow similary thanks to the transition morphisms being finite and \'etale:

For \ref{item: intersection}, let $\Ical_K$ and $\Jcal_K$ denote the ideal sheaves of $Z_K$ respectively $Y_K$ in $S_K$. A system of closed subschemes is $\GG(\AA_f^p)$-equivariant if and only if the corresponding system of ideal sheaves is. Since $Z_K\cap Y_K$ is the subscheme corresponding to $\Ical_K + \Jcal_K$, \ref{item: intersection} follows. 

For \ref{item: reduced subscheme}, since 
the transition morphisms $\pi_{K'/K}\colon S_{K'}\to S_K$  are \'etale, we find that $(Y_K)_{\red} \times_{S_K} S_{K'}$ is reduced. Hence, $(Y_K)_{\red} \times_{S_K} S_{K'}= (Y_{K'})_{\red}$.

For \ref{item: smooth locus}, let $\pi \coloneqq \pi_{K'/K}$. Since $\pi$ is surjective, maximality of $Y_{K'}^{\text{sm}}$ implies that 
$\pi(Y_{K'}^{\text{sm}})\supset Y_K^{\text{sm}}$.  Since $\pi$ is \'etale, 
each point in $\pi(Y_{K'}^{\text{sm}})$ is smooth. Hence, by maximality of $Y_K^{\text{sm}}$, we have that $\pi(Y_{K'}^{\text{sm}})=Y_K^{\text{sm}}$ and thus maximality of $Y_{K'}^{\text{sm}}$ gives that $\pi^*Y_K^{\text{sm}}=Y_{K'}^{\text{sm}}$ as topological spaces. Since $(Y_K)_K$ is equivariant, $\pi^*\Ocal_{Y_K^{\text{sm}}}=\Ocal_{Y_{K'}^{\text{sm}}}$ and we see that $\pi^*Y_K^{\text{sm}} = Y_{K'}^{\text{sm}}$ as schemes.

Finally, we show \ref{item: complement}. Since the transition maps are finite and \'etale, they are open and closed. Hence, $\pi^*Z_K$ identifies with the closed subset $Z_{K'}$. Since the transition maps are \'etale, $\pi^*Z_K$ is reduced. By uniqueness of the reduced subscheme structure, we thus see that $\pi^*Z_K= Z_{K'}$. 
\end{proof}
\begin{lemma}\label{lemma: equivariant.morphisms}
Suppose that $(Z_K)_K$ is a $\GG(\AA_f^p)$ equivariant system of closed subschemes of $(S_K)_K$ corresponding to ideal sheaves $(\Ical_K)_K$. Then, for all integers $n\geq 0$ the following sheaves and morphisms thereof are $\GG(\AA_f^p)$-equivariant:
\begin{enumerate}
    \item the power $\Ical_K^n$,
    \item the short exact sequence
    \[
    0\to \Ical_K^{n+1}\to \Ical_K^n \to \Ical_K^n/\Ical_K^{n+1}\to 0,
    \]
    \item the natural surjection 
    \[
    \sym^n(\Ical_K/\Ical_K^2)\to \Ical_K^n/\Ical_K^{n+1}\to 0,
    \]
    and
    \item the conormal exact sequence
    \[
    \Ical_K/\Ical_K^2\to \Omega_{S_K}|_{Z_K} \to \Omega_{Z_K}\to 0
    \]
\end{enumerate}
Furthermore, if $(Y_K)_K$ is another equivariant system of closed subschemes of $(S_K)_K$ corresponding to ideal sheaves $(\Jcal_K)_K$ such that $\Jcal_K \subset \Ical_K$, then the natural quotient map induces a $\GG(\AA_f^p)$-equivariant map
\[
\Ical_K^n/\Ical_K^{n+1}\to \Jcal_K^n/\Jcal_K^{n+1}\to 0.
\]
\end{lemma}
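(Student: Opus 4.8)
The plan is to reduce every assertion to a single observation: for a morphism $\phi\colon S_{K'}\to S_K$ equal either to a transition map $\pi_{K'/K}$ (with $K'\subset K$) or to one of the isomorphisms $g\colon S_{gKg^{-1}}\to S_K$, the pullback functor $\phi^*$ on quasi-coherent sheaves is an \emph{exact} $\Ocal$-linear tensor functor; this holds because $\pi_{K'/K}$ is finite étale and $g$ is an isomorphism, so in all cases $\phi$ is flat. By definition, $\GG(\AA_f^p)$-equivariance of a sheaf (or of a morphism of sheaves) means exactly that each such $\phi^*$ carries it to the corresponding object for $K'$, so the whole lemma becomes a matter of commuting $\phi^*$ past the constructions appearing in the list. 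As a first step I would record, using the equivariance of $(Z_K)_K$, that $\phi^*$ sends $\Ical_K\hookrightarrow\Ocal_{S_K}$ to $\Ical_{K'}\hookrightarrow\Ocal_{S_{K'}}$ (the inclusion is preserved since $\phi^*$ is exact), and likewise $\phi^*\Jcal_K=\Jcal_{K'}$ from the equivariance of $(Y_K)_K$.

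Next I would dispatch (1)--(3) and the final map directly. Writing $\Ical_K^n=\im\big(\Ical_K^{\otimes n}\to\Ocal_{S_K}\big)$ (iterated multiplication) and using that $\phi^*$ commutes with $\otimes$ and, by exactness, with images yields $\phi^*(\Ical_K^n)=\Ical_{K'}^n$ together with the pullback of the inclusion $\Ical_K^{n+1}\hookrightarrow\Ical_K^n$; this is (1). Applying the exact functor $\phi^*$ to the short exact sequence of (2) then gives the sequence of (2) for $K'$, with the canonical maps by functoriality, and in particular identifies $\phi^*(\Ical_K^n/\Ical_K^{n+1})$ with $\Ical_{K'}^n/\Ical_{K'}^{n+1}$. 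For (3), $\sym^n$ is a canonical quotient of $\otimes^n$ and hence commutes with the right-exact functor $\phi^*$, while the surjection $\sym^n(\Ical_K/\Ical_K^2)\to\Ical_K^n/\Ical_K^{n+1}$ is the canonical one induced by multiplication; pulling back and invoking (1)--(2) gives the same surjection for $K'$. The final displayed map is, in the same spirit, the canonical map determined by the equivariant pair $\Jcal_K\subseteq\Ical_K$ (induced by the quotient $\Ocal_{S_K}\twoheadrightarrow\Ocal_{S_K}/\Jcal_K$), so its equivariance follows by the identical argument.

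The one step requiring more than this exact-monoidal formalism is the conormal sequence (4), which is where I expect the only real — though still routine — work to lie. For $\phi=g$ there is nothing to check, so take $\phi=\pi_{K'/K}$ and write $\iota_K\colon Z_K\hookrightarrow S_K$. Equivariance of $(Z_K)_K$ gives $Z_{K'}=Z_K\times_{S_K}S_{K'}$, so the projection $\psi\colon Z_{K'}\to Z_K$ is a base change of the étale morphism $\phi$ and hence étale. Using that for an étale morphism $f\colon X\to Y$ the canonical map $f^*\Omega_{Y/\kappa}\to\Omega_{X/\kappa}$ is an isomorphism, I obtain $\phi^*\Omega_{S_K/\kappa}\cong\Omega_{S_{K'}/\kappa}$ and $\psi^*\Omega_{Z_K/\kappa}\cong\Omega_{Z_{K'}/\kappa}$; restricting the first along $\iota_{K'}$ via the cartesian square also gives $\psi^*\big(\Omega_{S_K/\kappa}|_{Z_K}\big)\cong\Omega_{S_{K'}/\kappa}|_{Z_{K'}}$. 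Combined with $\psi^*(\Ical_K/\Ical_K^2)=\Ical_{K'}/\Ical_{K'}^2$ from (1)--(2) and the naturality of the conormal sequence with respect to closed immersions over $\kappa$, applying the exact functor $\psi^*$ to the conormal sequence of $\iota_K$ produces that of $\iota_{K'}$, with its canonical maps. The only potential obstacle is thus ensuring the compatibilities of Kähler differentials with the étale transition maps and the cartesian presentation of $Z_{K'}$; once these are in hand, (4) follows and the lemma is complete.
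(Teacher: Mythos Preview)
Your proposal is correct and follows essentially the same approach as the paper's own proof, which reduces everything to two observations: the transition morphisms are \'etale (hence $\pi_{K'/K}^*\Omega_{Z_K}=\Omega_{Z_{K'}}$), and flatness of the transition morphisms makes pullback exact and compatible with $\sym^n$. Your write-up is considerably more detailed than the paper's three-sentence proof---spelling out the cartesian presentation of $Z_{K'}$, the naturality of the conormal sequence, and the handling of images and quotients---but the underlying ideas are identical.
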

\begin{proof}
The transition morphisms $\pi_{K'/K}\colon Z_{K'}\to Z_K$ are \'etale, hence $\pi_{K'/K}^*\Omega_{Z_K}=\Omega_{Z_K'}$. Since $\sym^n$ commutes with arbitrary pullbacks, the statements thus follow from flatness of the transition morphisms.
\end{proof}
\subsection{Miscellaneous}
For the convenience of the reader, 
we collect here a two elementary statements that are used in the proof of \Cref{theorem: main theorem intro} and \Cref{theorem: second theorem intro}.

Recall that given a scheme $X$ and a line bundle $\Lscr$ on $X$, a global section $s$ of $\Lscr$ is called \textit{injective} (also often called regular) if the map $\Ocal_X \to \Lscr$, $f\mapsto fs$ is injective.
\begin{lemma}\label{lemma: nowhere vanishing. injective}
If $X$ is a reduced scheme and $s$ is a nowhere vanishing global section of a line bundle $\Lscr$ on $X$, then $s$ is injective.
\end{lemma}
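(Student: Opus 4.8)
The plan is to reduce the statement to a local, affine computation. Since injectivity of the map $\Ocal_X \to \Lscr$, $f \mapsto fs$ can be checked on an open cover, I would first choose an open cover $\{U_i\}$ of $X$ on which $\Lscr$ trivializes, i.e.\ $\Lscr|_{U_i} \cong \Ocal_{U_i}$. Under such a trivialization, $s|_{U_i}$ corresponds to a function $s_i \in \Ocal_X(U_i)$, and the hypothesis that $s$ is nowhere vanishing says precisely that $s_i$ does not lie in any prime ideal of $\Ocal_X(U_i)$ (equivalently, $s_i$ is a unit in $\Ocal_{X,x}$ for every $x \in U_i$), while the conclusion to prove is that multiplication by $s_i$ is injective on $\Ocal_{U_i}$. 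So the problem becomes: if $X$ is reduced and $s_i$ vanishes at no point of $U_i$, then $s_i$ is a non-zero-divisor.

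Next I would reduce to the affine case by covering each $U_i$ by affine opens $\Spec A$, where $A$ is a reduced ring and the restriction of $s_i$ is an element $a \in A$ lying outside every prime of $A$. Injectivity of multiplication by $a$ on $A$ is the assertion that $a$ is a non-zero-divisor, which can in turn be checked after localizing at each associated prime of $A$ (or, more simply, at each minimal prime, using reducedness). The key point is the standard fact that in a reduced ring the set of zero-divisors is the union of the minimal primes. Since $a$ lies in no prime of $A$ at all, in particular it lies in no minimal prime, hence $a$ is not a zero-divisor. Alternatively, and perhaps more cleanly for a scheme-theoretic audience: an element $a \in A$ is a non-zero-divisor iff it maps to a non-zero-divisor in the total ring of fractions; for $A$ reduced (and Noetherian, which holds here since everything is of finite type over a field) the total ring of fractions is the finite product $\prod_{\mathfrak p \text{ minimal}} \kappa(\mathfrak p)$ of the residue fields at the generic points, and the image of $a$ in each factor is nonzero precisely because $a \notin \mathfrak p$; a product of nonzero elements of fields is a non-zero-divisor.

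I would then assemble these local statements: if $fs = 0$ globally for some $f \in \Ocal_X(X)$, then $f|_{U_i} \cdot s_i = 0$ in $\Ocal_X(U_i)$ for every $i$, so $f|_{U_i} = 0$ by the affine computation above, and hence $f = 0$ since the $U_i$ cover $X$. This shows $s$ is injective.

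The only mildly delicate point is making sure the reduction to "non-zero-divisor" is legitimate globally: one should note that the sheaf map $\Ocal_X \to \Lscr$ is a map of quasi-coherent sheaves, hence injective iff injective on every member of an open affine cover, and that a section $f$ with $fs = 0$ restricts compatibly. None of this is hard; the substantive input is purely the commutative-algebra fact that in a reduced ring a unit-at-every-point element is a non-zero-divisor, which is where the reducedness hypothesis is used (it fails badly without it, e.g.\ in $k[\epsilon]/\epsilon^2$). So I expect no real obstacle here — the main task is simply to package the local-to-global argument cleanly rather than to overcome any genuine difficulty.
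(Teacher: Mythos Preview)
Your proposal is correct and takes essentially the same approach as the paper: reduce to the affine case $X=\Spec A$ with $A$ reduced and $s\in A$ lying in no prime, then conclude $s$ is a non-zero-divisor. The paper's argument is marginally more direct than yours --- rather than invoking the description of zero-divisors via minimal primes or the total ring of fractions, it simply notes that $fs=0$ forces $f$ into every prime (since $s$ is in none), hence $f$ is nilpotent, hence $f=0$ by reducedness.
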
 
\begin{proof}
The question is local on $X$ so we can assume that $X=\Spec A$ for a reduced ring $A$ and that $s\in A$. Since $s$ is nowhere vanishing, for all prime ideals $\fkp\subset A$, $s\notin \fkp$. Hence, if $fs=0$ for some $f\in A$, then $f$ is contained in all prime ideals of $A$, hence $f$ is nilpotent. Since $A$ is reduced, $f=0$.
\end{proof}
\begin{lemma}\label{lemma: injectivity of sym}
Suppose that $X$ is a regular scheme and that $\iota\colon \Vscr\hookrightarrow \Vscr'$ is an inclusion of locally free sheaves. Then, for all $n\geq 0$, $\sym^n(\iota)$ is injective.
\end{lemma}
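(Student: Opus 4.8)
The plan is to reduce to a local, algebraic statement and then exploit regularity to pass to a field, where the symmetric power of an injection is visibly an injection. First I would observe that injectivity can be checked locally, indeed stalk by stalk, so I may assume $X = \Spec A$ with $A$ a regular local ring and that $\iota\colon V \hookrightarrow V'$ is an inclusion of free $A$-modules of finite rank (shrinking $X$ if necessary to trivialize both bundles). The key point is that $A$ is an integral domain (regular local rings are domains), so it embeds in its fraction field $F = \operatorname{Frac}(A)$.

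The main step is the following diagram chase. Tensoring the inclusion $V \hookrightarrow V'$ with $F$ over $A$ gives an inclusion $V \otimes_A F \hookrightarrow V' \otimes_A F$ of $F$-vector spaces (tensoring with the flat, in fact faithfully flat, module $F$ preserves injectivity). Since $\sym^n$ of an injective linear map of vector spaces over a field is injective — a finite-dimensional subspace has a basis that extends to a basis of the ambient space, and the induced map on symmetric powers sends a basis into a linearly independent family of monomials — we conclude $\sym_F^n(V\otimes_A F) \hookrightarrow \sym_F^n(V'\otimes_A F)$ is injective. Now $\sym_A^n(V)$ is a free $A$-module, hence torsion-free, so the natural map $\sym_A^n(V) \to \sym_A^n(V)\otimes_A F = \sym_F^n(V\otimes_A F)$ is injective. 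Chasing the commutative square
\[
\begin{tikzcd}
\sym_A^n(V) \arrow[r, "\sym^n(\iota)"] \arrow[d, hook] & \sym_A^n(V') \arrow[d, hook] \\
\sym_F^n(V\otimes_A F) \arrow[r, hook] & \sym_F^n(V'\otimes_A F)
\end{tikzcd}
\]
the composite $\sym_A^n(V) \to \sym_A^n(V') \to \sym_F^n(V'\otimes_A F)$ is injective, hence so is $\sym^n(\iota)$.

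I expect the only subtlety to be keeping the reduction clean: one should note that it suffices to have $A$ a domain (regularity is used only to guarantee that, or more directly one could invoke that $X$ regular implies $X$ integral locally — though in general $X$ regular merely means all local rings are regular local, hence domains, so $X$ is locally integral and the argument above goes through on each such local piece). No genuinely hard input is needed; the statement is essentially the assertion that $\sym^n$ preserves injections of torsion-free modules over a domain, and the fraction-field trick dispatches it. I would present it in the local-ring form above, remarking that flatness of $F$ over $A$ and freeness of $\sym^n$ of a free module are the two ingredients.
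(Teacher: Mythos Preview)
Your proposal is correct and follows essentially the same approach as the paper: reduce to stalks, use that a regular local ring is a domain, pass to the fraction field where $\sym^n$ visibly preserves injections, and conclude using torsion-freeness of $\sym^n$ of a free module. The paper phrases the final step as showing $\ker(\sym^n(\iota))$ is torsion inside a locally free sheaf, while you phrase it as a diagram chase through the commutative square; these are the same argument.
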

\begin{proof}
Let $\Kcal \coloneqq \ker\sym^n(\iota)$. Since $\sym^n(\Vscr)$ is locally free, it suffices to show that $\Kcal$ is torsion, i.e. that  $\Kcal_x$ is torsion for all $x\in X$. Fix a point $x$ in $X$. Since $X$ is regular the local ring $\Ocal_{X,x}$ is a domain. Let $Q$ denote the fraction field. Since $\sym^n$ takes injections of vector spaces to injections and commutes with localizations, and since localization is exact and $\sym^n$ commutes with taking stalks, we see that 
\begin{equation}
\begin{aligned}  
0 &= \ker\Big(\sym^n(\Vscr_x\otimes Q) \to \sym^n(\Vscr_x' \otimes Q)\Big) \\
&= \ker\Big(\sym^n(\Vscr_x)\otimes Q \to \sym^n(\Vscr_x') \otimes Q\Big) \\
&= \ker\Big(\sym^n(\Vscr_x) \to \sym^n(\Vscr_x') \Big) \otimes Q \\
&=\Kcal_x\otimes Q.
\end{aligned}
\end{equation}
Hence, $\Kcal_x$ is torsion, and thus $\Kcal=0$.
\end{proof}
\section{Systems of Hecke eigenvalues: Proof of the main theorems}\label{section: main section}
\subsection{Notation}
Let $(Y_K)_K \subset (S_K)_K$ be a $\GG(\AA_f^p)$-equivariant system of subschemes. 
We denote by $\omega = \det\Omega_{S_K}$ the Hodge line bundle and we use the same notation when we restrict $\omega$ to $Y_K$, as opposed to the line bundle $\det \Omega_{Y_K}$ which need not be automorphic. Let 
\[
M(Y_K) \coloneqq \bigoplus_{\eta\in X_{+,L}^*(T)}H^0(Y_K, \Vscr(\eta)).
\] 
Suppose that $(Z_K)_K$ is another system of equivariant subschemes. If any system of Hecke eigenvalues that appear in $M(Z_K)$ also appears in $M(Y_K)$, then we write $M(Z_K)\rightsquigarrow_{\Hcal} M(Y_K)$. If also $M(Y_K)\rightsquigarrow_{\Hcal} M(Z_K)$, then we write $M(Z_K) \leftrightsquigarrow_{\Hcal} M(Y_K)$.
\subsection{Proof of \Cref{theorem: main theorem intro}}\label{subsection: proof of thm A}
\begin{lemma}\label{lemma: good filtraation}
If $(b_T)_T$ is a system Hecke eigenvalues appearing in $H^0(Y_K, \sym^n(\Omega_{S_K}|_{Y_K})\otimes \Vscr(\eta))$ for some $\eta\in X_{+,L}^*(T)$, then it appears in $H^0(Y_K, \Vscr(\eta_i))$ for some $\eta_i\in X_{+,L}^*(T)$.
\end{lemma}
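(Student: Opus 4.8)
The plan is to reduce to the $G$-zip side via \Cref{lemma: good filtration L-module} and then peel off the filtration one graded piece at a time, using that taking global sections of vector bundles on a scheme is left exact and that all the maps in sight are Hecke-equivariant. First I would recall that $\Omega_{S_K}$ is the automorphic vector bundle attached to the $L$-representation $(\mathrm{Lie}(G)/\mathrm{Lie}(P))^\vee$, so that for every $n\geq 0$ the sheaf $\sym^n(\Omega_{S_K}|_{Y_K})\otimes\Vscr(\eta)$ is the restriction to $Y_K$ of the automorphic vector bundle $\Vscr\big(\sym^n((\mathrm{Lie}(G)/\mathrm{Lie}(P))^\vee)\otimes V_\eta\big)$. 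Since the functor $r\mapsto\Vscr(r)$ from $L$-representations (or $P$-representations) to vector bundles on $\GZip^\mu$ is exact, the $L$-stable filtration $V_0\supset V_1\supset\cdots\supset V_m\supset 0$ supplied by \Cref{lemma: good filtration L-module}, with graded pieces $V_i/V_{i+1}\cong V_{\eta_i}$, pulls back along $\zeta_K$ and restricts to $Y_K$ to give a filtration of $\sym^n(\Omega_{S_K}|_{Y_K})\otimes\Vscr(\eta)$ by subbundles $\Vscr(V_i)|_{Y_K}$ with short exact sequences
\[
0\to \Vscr(V_{i+1})|_{Y_K}\to \Vscr(V_i)|_{Y_K}\to \Vscr(V_{\eta_i})|_{Y_K}=\Vscr(\eta_i)|_{Y_K}\to 0.
\]
All of these are Hecke-equivariant: the filtration comes from a fixed $L$-module structure, hence is compatible with the $\zeta_K$ and therefore, by \Cref{example: equiv systems}\ref{item: eo strata}, defines a Hecke-equivariant system on $(S_K)_K$, whose restriction to the Hecke-equivariant system $(Y_K)_K$ is again Hecke-equivariant.

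Next I would run the standard cohomological descent along the filtration. Applying $H^0(Y_K,-)$ to each short exact sequence above gives a left-exact sequence of $\Hcal$-modules
\[
0\to H^0(Y_K,\Vscr(V_{i+1})|_{Y_K})\to H^0(Y_K,\Vscr(V_i)|_{Y_K})\to H^0(Y_K,\Vscr(\eta_i)|_{Y_K}),
\]
so $H^0(Y_K,\Vscr(V_{i+1})|_{Y_K})$ injects into $H^0(Y_K,\Vscr(V_i)|_{Y_K})$ as $\Hcal$-modules, and the cokernel of this injection injects into $H^0(Y_K,\Vscr(\eta_i)|_{Y_K})$. Now suppose $(b_T)_T$ appears in $H^0(Y_K,\Vscr(V_0)|_{Y_K})=H^0(Y_K,\sym^n(\Omega_{S_K}|_{Y_K})\otimes\Vscr(\eta))$. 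I would induct downward on the filtration: if $(b_T)_T$ appears in $H^0(Y_K,\Vscr(V_i)|_{Y_K})$, then — since an eigensystem appearing in a module appears either in a sub or in the quotient — it appears either in $H^0(Y_K,\Vscr(V_{i+1})|_{Y_K})$ or in (a submodule of) $H^0(Y_K,\Vscr(\eta_i)|_{Y_K})$. In the latter case we are done, taking $\eta_i$ as the required $L$-dominant character. In the former case we continue; since $V_m\supset 0$ with $V_{m+1}=0$, if we never land in a graded piece we eventually conclude $(b_T)_T$ appears in $H^0(Y_K,\Vscr(V_m)|_{Y_K})$, and as $V_m=V_{\eta_m}$ is itself a highest-weight module (equivalently, apply the last short exact sequence with $V_{m+1}=0$), this is $H^0(Y_K,\Vscr(\eta_m)|_{Y_K})$, finishing the induction.

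The only genuinely delicate point is bookkeeping the Hecke-equivariance of the filtration $\Vscr(V_\bullet)|_{Y_K}$ and of the resulting short exact sequences: one must check that the maps $\Vscr(V_{i+1})\hookrightarrow\Vscr(V_i)\twoheadrightarrow\Vscr(V_{\eta_i})$, which a priori live on $\GZip^\mu$, pull back compatibly along the $\zeta_K$ and restrict compatibly to the $Y_K$, so that all the $H^0$ maps are genuine maps of $\Hcal$-modules; this is exactly the kind of compatibility packaged in \Cref{example: equiv systems}\ref{item: eo strata} together with the fact that restriction along a Hecke-equivariant closed immersion preserves Hecke-equivariance. Everything else — exactness of $\Vscr(-)$, left-exactness of $H^0$, the sub-or-quotient principle for eigensystems from the Remark following the definition of the Hecke action — is formal, so I expect the proof to be short once the equivariance is set up carefully.
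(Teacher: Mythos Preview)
Your proposal is correct and follows essentially the same approach as the paper: both arguments invoke the good filtration of \Cref{lemma: good filtration L-module}, note that the resulting maps are Hecke-equivariant because they come from $\GZip^\mu$, and then conclude that an eigensystem in $H^0(Y_K,\Vscr(V_0)|_{Y_K})$ must land in some $H^0(Y_K,\Vscr(\eta_i))$. Your filtration-descent argument is in fact a more careful unpacking of what the paper compresses into the statement that the induced map into $\bigoplus_i H^0(Y_K,\Vscr(\eta_i))$ is injective.
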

\begin{proof}
Let $V_\bullet$ be the filtration on $\sym^n\Big((\text{Lie}(G)/\text{Lie}(P))^{\vee}\Big) \otimes V_\eta$ from \Cref{lemma: good filtration L-module}. The projections $V_0\twoheadrightarrow V_i$ induce morphisms $\pi_i\colon \sym^n(\Omega_{S_K}|_{Y_K})\otimes \Vscr(\eta)\to \Vscr(\eta_i)$ which are $\GG(\AA_f^p)$-equivariant since they are defined over $\GZip^\mu$. Hence, we obtain a $\GG(\AA_f^p)$-equivariant map
\[
H^0(Y_K, \sym^n(\Omega_{S_K}|_{Y_K})\otimes \Vscr(\eta)) \to \bigoplus_{i=0}^m H^0(Y_K, \Vscr(\eta_i)),
\]
which is readily seen to be injective.
\end{proof}
\begin{lemma}\label{lemma: going-down}
Suppose that $Z_K\subset Y_K$ are two $\GG(\AA_f^p)$-equivariant subschemes such that $Z_K$ is proper and smooth. 
If there exists an $r>0$ and an injective Hecke-equivariant section $h$ of $H^0(Z_K,\omega^r)$, 
then $M(Y_K)\rightsquigarrow_{\Hcal} M(Z_K)$. 
\end{lemma}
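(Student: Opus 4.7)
The plan is to adapt the Ghitza-Goldring-Koskivirta going-down strategy of \cite[Theorem 11.1.1]{goldring.koskivirta-strata.hasse} to accommodate the possibly singular $Y_K$. By \Cref{lemma: equivariant stuff}, I may first replace $Y_K$ with its reduction and assume it is reduced. Let $(b_T)_T$ be a system of Hecke eigenvalues appearing in $M(Y_K)$, realized after a finite scalar extension by an eigensection $s \in H^0(Y_K, \Vscr(\eta))$ for some $\eta \in X_{+,L}^*(T)$. Let $\Ical$ and $\Ical'$ denote the ideal sheaves of $Z_K$ in $Y_K$ and in $S_K$, respectively. I would first pick the largest integer $n \geq 0$ with $s \in H^0(Y_K, \Ical^n \Vscr(\eta))$; this exists by Krull's intersection theorem applied at the stalks of $Y_K$ along $Z_K$ (after restricting to the components of $Y_K$ meeting $Z_K$, if needed). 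By \Cref{lemma: equivariant.morphisms}, the Hecke-equivariant short exact sequence
\[
0 \to \Ical^{n+1} \otimes \Vscr(\eta) \to \Ical^n \otimes \Vscr(\eta) \to (\Ical^n/\Ical^{n+1}) \otimes \Vscr(\eta) \to 0
\]
yields a nonzero Hecke eigenvector $\bar s \in H^0(Z_K, (\Ical^n/\Ical^{n+1}) \otimes \Vscr(\eta))$ for the same eigensystem $(b_T)$. If $n=0$, then $\bar s = s|_{Z_K} \in M(Z_K)$ already realizes $(b_T)$ and we are done.

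For $n \geq 1$, the classical approach would embed this quotient into a symmetric power of the conormal of $Z_K \subset Y_K$ and invoke the good-filtration lemma, but this is obstructed by the possible failure of exactness and local freeness of the conormal sequence when $Y_K$ is singular. The plan is to circumvent this obstacle by passing from $\Ical$ to $\Ical'$: since $Z_K$ and $S_K$ are both smooth, $Z_K \hookrightarrow S_K$ is a regular embedding, so $\Ical'/(\Ical')^2$ is locally free, $\sym^n(\Ical'/(\Ical')^2) \xrightarrow{\sim} (\Ical')^n/(\Ical')^{n+1}$, and the Hecke-equivariant conormal sequence
\[
0 \to \Ical'/(\Ical')^2 \to \Omega_{S_K}|_{Z_K} \to \Omega_{Z_K} \to 0
\]
is short exact with locally free terms. \Cref{lemma: injectivity of sym} then gives a Hecke-equivariant injection $\sym^n(\Ical'/(\Ical')^2) \hookrightarrow \sym^n(\Omega_{S_K}|_{Z_K})$, while \Cref{lemma: equivariant.morphisms} provides a Hecke-equivariant surjection $(\Ical')^n/(\Ical')^{n+1} \twoheadrightarrow \Ical^n/\Ical^{n+1}$, connecting the quotient containing $\bar s$ with an automorphic sheaf.

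The main obstacle — and the place where the hypothesis on $h$ must enter, in contrast to the going-down argument of \cite{goldring.koskivirta-strata.hasse} — is that the surjection of sheaves $(\Ical')^n/(\Ical')^{n+1} \twoheadrightarrow \Ical^n/\Ical^{n+1}$ is generally not surjective on global sections, so $\bar s$ cannot simply be lifted. To deal with this, I would exploit the key properties of $h$: by \Cref{lemma: nowhere vanishing. injective} its injectivity together with the reducedness of $Z_K$ implies that $h$ is nowhere vanishing, so it trivializes $\omega^r|_{Z_K}$; multiplication by $h^m$ therefore defines, for any coherent sheaf $\Fcal$ on $Z_K$ and any $m \geq 0$, a Hecke-equivariant isomorphism $\Fcal \xrightarrow{\sim} \Fcal \otimes \omega^{rm}$ that preserves Hecke eigensystems (since $h$ is itself Hecke-equivariant). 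Leveraging these $h^m$-twists, I expect to produce a Hecke eigenvector for $(b_T)$ in $H^0(Z_K, \sym^n(\Ical'/(\Ical')^2) \otimes \Vscr(\eta) \otimes \omega^{rm})$ for a suitable $m$. Composing with the sym-injection into $H^0(Z_K, \sym^n(\Omega_{S_K}|_{Z_K}) \otimes \Vscr(\eta) \otimes \omega^{rm})$ and then invoking \Cref{lemma: good filtraation} (via \Cref{lemma: good filtration L-module}) shows that $(b_T)$ appears in some $H^0(Z_K, \Vscr(\eta_i)) \subset M(Z_K)$, completing the proof.
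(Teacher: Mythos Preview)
Your overall strategy matches the paper's closely: pass to the largest $n$ with $\bar s \neq 0$ in the $n$-th graded piece, trade the conormal of $Z_K$ in $Y_K$ for that in $S_K$ via the Hecke-equivariant surjection $(\Ical')^n/(\Ical')^{n+1}\twoheadrightarrow\Ical^n/\Ical^{n+1}$, inject into $\sym^n(\Omega_{S_K}|_{Z_K})$ via \Cref{lemma: injectivity of sym}, and finish with \Cref{lemma: good filtraation}. The gap is at the lifting step. First, you have inverted \Cref{lemma: nowhere vanishing. injective}: that lemma says nowhere-vanishing $+$ reduced $\Rightarrow$ injective, not the converse (e.g.\ $t\in H^0(\AA^1_\kappa,\Ocal)$ is injective but vanishes at the origin), so under the stated hypothesis you cannot conclude that $h$ trivializes $\omega^r|_{Z_K}$.

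Second, and more seriously, even granting that $h$ were nowhere vanishing, multiplication by $h^m$ would then be an \emph{isomorphism} $H^0(Z_K,\Fcal)\xrightarrow{\sim}H^0(Z_K,\Fcal\otimes\omega^{rm})$ for every coherent $\Fcal$, so the obstruction to lifting $\bar s$ along the sheaf surjection would be literally unchanged after twisting --- your ``$h^m$-twists'' buy nothing on their own, and the sentence ``I expect to produce a Hecke eigenvector\ldots'' hides the entire difficulty. The missing ingredient is Serre vanishing: since $Z_K$ is proper and $\omega$ is ample, for $a\gg0$ the $H^1$ of the kernel of $\sym^n(\Ical'/(\Ical')^2)\twoheadrightarrow\Ical^n/\Ical^{n+1}$ (twisted by $\Vscr(\eta)\otimes\omega^{ra}$) vanishes, making the map surjective on global sections. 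The correct role of $h$ is then only to ensure that $\bar s\cdot h^a$ remains a \emph{nonzero} Hecke eigenform for $(b_T)$, so that one has something in the target to lift; this is how the paper argues. (Incidentally, your preliminary replacement of $Y_K$ by its reduction is not obviously harmless --- the restriction $H^0(Y_K,\Vscr(\eta))\to H^0((Y_K)_{\red},\Vscr(\eta))$ can have kernel --- and the paper does not make it.)
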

\begin{proof}
Let $f\in H^0(Y_K,\Vscr(\eta))$ be a Hecke eigenform with system of Hecke eigenvalues $(b_T)_T$. Let $\Ical$ denote the ideal sheaf of $Z_K$ in $S_K$, and let $\Jcal$ denote the ideal sheaf of $Z_K$ in $Y_K$. Since $Y_K$ is Noetherian, there is a largest integer $n\geq 0$ such that $\overline{f}$ is nonzero in $H^0(Z_K,\Jcal^n/\Jcal^{n+1}\otimes \Vscr(\eta))$. By \Cref{lemma: equivariant.morphisms} we have a $\GG(\AA_f^p)$-equivariant surjection 
\[
\sym^n(\Ical/\Ical^2)\twoheadrightarrow \Ical^n/\Ical^{n+1}\twoheadrightarrow \Jcal^n/\Jcal^{n+1}.
\]
Since $Z_K$ is assumed proper, Serre vanishing shows that there is some $a>0$ such that 
\[
H^1(Z_K,\Jcal^n/\Jcal^{n+1}\otimes \Vscr(\eta)\otimes \omega^{ar})=0.
\]
We thus have a surjection
\begin{equation}\label{equation: surjection sym to i}
H^0(Z_K,\sym^n(\Ical/\Ical^2)\otimes \Vscr(\eta)\otimes \omega^{ar}) \twoheadrightarrow H^0(Z_K,\Jcal^n/\Jcal^{n+1}\otimes \Vscr(\eta)\otimes \omega^{ar})
\end{equation}
of $\Hcal$-modules. By assumption on $h$ the system $(b_T)_T$ of Hecke eigenvalues appears in $H^0(Z_K,\Jcal^n/\Jcal^{n+1}\otimes \Vscr(\eta)\otimes \omega^{ar})$ and thus by \ref{equation: surjection sym to i} it also appears in $H^0(Z_K,\sym^n(\Ical/\Ical^2)\otimes \Vscr(\eta)\otimes \omega^{ar})$. Consider the morphism
\[
\psi\colon \sym^n(\Ical/\Ical^2)\otimes \Vscr(\eta) \otimes \omega^{ar} \to \sym^n(\Omega_{S_K}|_{Z_K})\otimes \Vscr(\eta) \otimes \omega^{ar}
\]
obtained from the conormal exact sequence. Since $Z$ is smooth, $\Ical/\Ical^2$ is locally free and thus so is $\sym^n(\Ical^n/\Ical^{n+1})$. By \Cref{lemma: injectivity of sym} $\psi$ induces an injection of $\Hcal$-modules
\[
H^0(Z_K, \sym^n(\Ical/\Ical^2)\otimes \Vscr(\eta) \otimes \omega^{ar}) \to H^0(Z_K, \sym^n(\Omega_{S_K}|_{Z_K})\otimes \Vscr(\eta) \otimes \omega^{ar}).
\]
Hence, the system of Hecke eigenvalues $(b_T)_T$ also appears in $H^0(Z_K, \sym^n(\Omega_{S_K}|_{Z_K})\otimes \Vscr(\eta) \otimes \omega^{ar})$. By \Cref{lemma: good filtraation}, this implies that $(b_T)_T$ appears in $M(Z_K)$.
\end{proof}
\begin{lemma}\label{lemma: going-up}
Suppose that $Z_K\subset Y_K$ are two $\GG(\AA_f^p)$-equivariant subschemes such that $Y_K$ is proper and $Z_K\subset Y_K$ is closed. If there exists an $r>0$ and an injective Hecke equivariant section $h$ of $H^0(Z_K,\omega^r)$, then $M(Z_K)\rightsquigarrow_{\Hcal} M(Y_K)$. 
\end{lemma}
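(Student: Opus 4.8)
The plan is to run the going-up argument of \cite[Theorem 11.1.1]{goldring.koskivirta-strata.hasse}, which works essentially verbatim here. Given a system of Hecke eigenvalues $(b_T)_T$ appearing in $M(Z_K)$ — realized, after a finite scalar extension, by a Hecke eigenform $f\in H^0(Z_K,\Vscr(\eta))$ for some $\eta\in X_{+,L}^*(T)$ — I would (i) multiply by a power of $h$ to move into a twist by a large power of $\omega$, and then (ii) lift the resulting eigenform along a restriction map $H^0(Y_K,-)\twoheadrightarrow H^0(Z_K,-)$ that is surjective by Serre vanishing.

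For the bookkeeping I would first record that $\omega=\det\Omega_{S_K}=\Vscr(\eta_\omega)$ for the character $\eta_\omega\coloneqq\det\big((\mathrm{Lie}(G)/\mathrm{Lie}(P))^{\vee}\big)$; a character of $L$ pairs to $0$ with every coroot of $L$, so $\eta_\omega\in X_{+,L}^*(T)$, and hence $\eta+ar\,\eta_\omega\in X_{+,L}^*(T)$ with $\Vscr(\eta)\otimes\omega^{ar}\cong\Vscr(\eta+ar\,\eta_\omega)$ for every $a\ge 0$. Step (i): since $h$ is injective and $\Vscr(\eta)$ is locally free, multiplication by $h^{a}$ is a sheaf injection $\Vscr(\eta)\hookrightarrow\Vscr(\eta)\otimes\omega^{ar}$ on $Z_K$; because $h$ is a Hecke-equivariant section (a compatible system $(h_K)_K$ with $g^*h_K=h_{gKg^{-1}}$ and $\pi_{K'/K}^{*}h_K=h_{K'}$), the projection formula for the finite \'etale maps $\pi_{K'/K}$ and the maps $g$ shows that the induced injection on global sections commutes with every $T_g=\tr\big(g\circ\pi_{K'/gKg^{-1}}\big)\circ\pi_{K'/K}^{*}$. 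Thus $h^{a}f$ is a Hecke eigenform with eigenvalue system $(b_T)_T$, and $(b_T)_T$ appears in $H^0(Z_K,\Vscr(\eta)\otimes\omega^{ar})$ for every $a\ge 0$.

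Step (ii): let $\Jcal$ be the ideal sheaf of $Z_K$ in $Y_K$. As in \Cref{lemma: equivariant.morphisms}, the short exact sequence $0\to\Jcal\to\Ocal_{Y_K}\to\Ocal_{Z_K}\to 0$ is $\GG(\AA_f^p)$-equivariant; tensoring with the equivariant locally free sheaf $\Vscr(\eta)\otimes\omega^{ar}$ and taking cohomology gives an exact sequence of $\Hcal$-modules
\[
H^0\big(Y_K,\Vscr(\eta)\otimes\omega^{ar}\big)\longrightarrow H^0\big(Z_K,\Vscr(\eta)\otimes\omega^{ar}\big)\longrightarrow H^1\big(Y_K,\Jcal\otimes\Vscr(\eta)\otimes\omega^{ar}\big).
\]
Since $Y_K$ is proper and $\omega$ is ample, Serre vanishing (exactly as in \Cref{lemma: going-down}) lets me fix $a$ large enough that the right-hand term vanishes; the restriction map is then a surjection of $\Hcal$-modules, so $(b_T)_T$ appears in $H^0(Y_K,\Vscr(\eta)\otimes\omega^{ar})=H^0(Y_K,\Vscr(\eta+ar\,\eta_\omega))$, hence in $M(Y_K)$. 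This gives $M(Z_K)\rightsquigarrow_{\Hcal}M(Y_K)$.

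The only genuinely delicate point is the Hecke-equivariance of multiplication by $h$ on cohomology in step (i): this is where one must unwind the precise meaning of a Hecke-equivariant section and invoke the projection formula for the transition maps and for the Hecke correspondences. Everything else — the equivariant short exact sequence, Serre vanishing on the proper scheme $Y_K$, and the identification of $\Vscr(\eta)\otimes\omega^{ar}$ with $\Vscr(\eta')$ for $\eta'\in X_{+,L}^*(T)$ — is routine.
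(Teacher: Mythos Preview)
Your proposal is correct and follows essentially the same argument as the paper: multiply by a power of the injective Hecke-equivariant section $h$ to keep the eigenvalue system while twisting by $\omega^{ar}$, then use the ideal sheaf short exact sequence together with Serre vanishing on the proper scheme $Y_K$ to make the restriction map $H^0(Y_K,-)\to H^0(Z_K,-)$ surjective. You supply slightly more detail (the identification $\Vscr(\eta)\otimes\omega^{ar}\cong\Vscr(\eta+ar\,\eta_\omega)$ and the check of Hecke-equivariance of multiplication by $h$), but the route is the same.
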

\begin{proof}
The arguments of \cite[Lemma 11.2.6]{goldring.koskivirta-strata.hasse} applies verbatim, as follows. Let $f\in H^0(Z_K,\Vscr(\eta))$ be a Hecke eigenform. By the assumption on $h$, for all $a\geq 0$, $fh^a\in H^0(Z_K,\Vscr(\eta)\otimes \omega^{ra})$ is a nonzero Hecke eigenform with the same Hecke eigenvalues as $f$. Let $\Ical$ be the ideal sheaf of $Z_K$ in $Y_K$, and let $\iota\colon Z_K\hookrightarrow Y_K$ denote the immersion. Consider the $\GG(\AA_f^p)$-equivariant short exact sequence
\[
0\to \Ical \otimes \Vscr(\eta) \to \Vscr(\eta) \to \iota_{*}\Ocal_Z \otimes \Vscr(\eta) \to 0.
\]
By Serre vanishing, $H^1(Y_K, \Ical \otimes \Vscr(\eta) \otimes \omega^{ra})=0$ for $a$ large enough. Hence, we have a surjection of $\Hcal$-modules
\[
H^0(Y_K, \Vscr(\eta) \otimes \omega^{ra})\twoheadrightarrow H^0(Z_K, \Vscr(\eta) \otimes \omega^{ra})
\]
for $a$ large enough. Thus, the system $(b_T)_T$ appears in $M(Y_K)$.
\end{proof}
\begin{remark}
In \Cref{lemma: going-down} and \Cref{lemma: going-up}, one can replace $\omega$ with any ample, Hecke-equivariant line bundle.
\end{remark}
\begin{theorem}\label{theorem: main theorem.in.text}(\Cref{theorem: main theorem intro})
Suppose that $(\GG,\XX)$ is of compact-type. Let $(Y_K)_K \subset (S_K)_K$ be an arbitrary $\GG(\AA_f^p)$-equivariant system of closed subschemes. We have that $M(Y_K)\leftrightsquigarrow_{\Hcal} M(S_K)$.
\end{theorem}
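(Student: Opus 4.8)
The plan is to deduce the theorem from \Cref{lemma: going-down} and \Cref{lemma: going-up} by exhibiting, inside $Y_K$, a nonempty, proper, smooth, reduced, Hecke-equivariant closed subscheme $Z_K$ carrying an injective Hecke-equivariant section of a power of $\omega$; the theorem then follows by running both lemmas twice, once with ambient scheme $Y_K$ and once with ambient scheme $S_K$. We may assume $Y_K\neq\emptyset$ — emptiness for one $K$ forces it for all, since the transition maps and the $g$-actions are surjective, and the empty system carries no systems of Hecke eigenvalues. Since $(\GG,\XX)$ is of compact type, $S_K$ is proper over $\kappa$, so every closed subscheme of $S_K$ is automatically proper.

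The first step is to cut $Y_K$ down to the smallest length stratum it meets. Let $j_0$ be minimal such that $Y_K$ meets the length-$j_0$ stratum of $S_K$. Because the length strata of $(S_K)_K$ form a Hecke-equivariant system (\Cref{example: equiv systems}, part \ref{item: eo strata}) and the transition maps are surjective, $Y_{K'}$ meets the length-$j_0$ stratum of $S_{K'}$ if and only if $Y_K$ does, and similarly after a $g$-twist; hence $j_0$ is independent of $K$. Since $Y_K$ avoids every length-$i$ stratum with $i<j_0$, the closure relation \eqref{equation: closure length stratum} shows that intersecting $Y_K$ with the closure of the length-$j_0$ stratum produces a closed subscheme of $Y_K$ that is set-theoretically contained in the open length-$j_0$ stratum. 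Equip it with its reduced structure and call it $W_K$; it is nonempty, reduced, closed in $S_K$, and Hecke-equivariant by parts \ref{item: intersection} and \ref{item: reduced subscheme} of \Cref{lemma: equivariant stuff}.

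The second step is to pass to a smooth closed subscheme by iterating the singular-locus construction. Since $\kappa$ is perfect and $W_K$ is reduced and of finite type, its smooth locus $W_K^{\text{sm}}$ is dense open, so the reduced closed complement $W_K\setminus W_K^{\text{sm}}$ is a strictly smaller closed subscheme, still Hecke-equivariant by parts \ref{item: smooth locus} and \ref{item: complement} of \Cref{lemma: equivariant stuff}. Iterating produces a strictly descending chain of reduced closed Hecke-equivariant subschemes of $S_K$; it terminates by Noetherianity, and we let $Z_K$ be its first smooth member, which is nonempty because $W_K$ is. As the transition maps are étale and surjective, the chain over $S_{K'}$ is the pullback of the chain over $S_K$ and stops at the same stage, so $(Z_K)_K$ is a genuine Hecke-equivariant system. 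Thus $Z_K\subseteq Y_K$ is a nonempty, proper, smooth, reduced, Hecke-equivariant closed subscheme contained in the length-$j_0$ stratum of $S_K$.

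The final step is to produce the section and chain together the two lemmas. Pulling back the length Hasse invariant $\ha_{j_0}$ of \Cref{lemma: length Hasse invariant} along $\zeta_K$ gives a Hecke-equivariant section of $\omega^r$ over the closure of the length-$j_0$ stratum of $S_K$ whose non-vanishing locus is exactly that stratum; restricting it to $Z_K$ yields a nowhere vanishing Hecke-equivariant section $h\in H^0(Z_K,\omega^r)$, which is injective by \Cref{lemma: nowhere vanishing. injective} since $Z_K$ is reduced. Now \Cref{lemma: going-down} applied to $Z_K\subseteq Y_K$ gives $M(Y_K)\rightsquigarrow_{\Hcal}M(Z_K)$, and applied to $Z_K\subseteq S_K$ gives $M(S_K)\rightsquigarrow_{\Hcal}M(Z_K)$; \Cref{lemma: going-up} applied to $Z_K\subseteq S_K$ gives $M(Z_K)\rightsquigarrow_{\Hcal}M(S_K)$, and applied to $Z_K\subseteq Y_K$ gives $M(Z_K)\rightsquigarrow_{\Hcal}M(Y_K)$. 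Composing the implications yields $M(Y_K)\leftrightsquigarrow_{\Hcal}M(S_K)$. Granting the two going-up/going-down lemmas, the step I expect to be the main obstacle is the construction of $Z_K$: arranging that the singular-locus iteration terminates at a scheme that is simultaneously smooth, closed in $S_K$, nonempty and Hecke-equivariant and supports a nowhere vanishing Hecke-invariant section. The genuinely new input sits inside \Cref{lemma: going-down}, which must run the going-down argument even though the conormal sequence of $Y_K$ (or of $S_K$) need not be exact or composed of locally free sheaves.
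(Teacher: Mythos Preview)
Your proof is correct and follows essentially the same route as the paper: intersect $Y_K$ with the closure of the smallest length stratum it meets, pass to the reduced structure, iterate the singular-locus removal until you reach a smooth closed Hecke-equivariant subscheme $Z_K$, restrict the length Hasse invariant to obtain a nowhere vanishing (hence injective) Hecke-equivariant section on $Z_K$, and then apply \Cref{lemma: going-down} and \Cref{lemma: going-up} to both inclusions $Z_K\subset Y_K$ and $Z_K\subset S_K$. Your handling of the termination step (taking the \emph{first smooth} member of the chain and verifying it is nonempty) and of the independence of $j_0$ from $K$ is in fact slightly more careful than the paper's own write-up.
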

\begin{proof}
Let 
\[
S_K = S_{K,\dim(S_K)} \cup ... \cup S_{K,0}
\]
be the length stratification of $S_K$, and let 
\[
i_0\coloneqq \min\{ i : Y_K\cap \overline{S}_{K,i}\neq \emptyset \}.
\]
Define inductively the subschemes 
\[
Z_K^{(0)} \coloneqq (Y_K\cap \overline{S}_{K,i_0})_{\red},
\]
and for all $i\geq 1$,
\[
Z_K^{(i)}\coloneqq (Z_K^{(i-1)}\setminus Z_K^{(i-1),\text{sm}})_{\red}.
\]
By \Cref{lemma: equivariant stuff}, $Z_K^{(i)}$ form an equivariant system of closed subschemes and we get a decreasing chain of closed subschemes
\[
Y_K \supset Z_K^{(0)} \supset Z_K^{(1)} \supset \cdots 
\]
Since $Y_K$ is Noetherian, there is a minimal integer $m$ such that $Z_K^{(m)}=Z_K^{(m')}$ for all $m'\geq m$. Let $Z_K\coloneqq Z_K^{(m)}$. If $Z_K$ is not smooth, then $Z^{(m),\text{sm}}\subsetneq Z^{(m)}$ which implies that $Z^{(m+1)}\subsetneq Z^{(m)}$, contradicting the minimality of $m$. Hence, $Z_K$ is smooth.

By \Cref{lemma: length Hasse invariant} there exists an integer $r>0$ and a section $\ha_{i_0}$ of $H^0(\overline{S}_{K,i_0},\omega^r)$ whose nonvanishing locus is exactly $S_{K,i_0}$. Let $h$ denote the restriction of $\ha_{i_0}$ to $Z_K$. We claim that $h$ is nowhere vanishing. If $i_0=0$, then $\ha_{i_0}$ is nowhere vanishing, and thus so is $h$. Hence, we may assume that $i_0>0$. By \Cref{equation: closure length stratum} we have that
\[
\overline{S}_{K,i_0}=\bigcup_{i\leq i_0} S_i =S_{K,i_0}\cup \bigcup_{i\leq i_0-1} S_i = S_{K,i_0} \cup \overline{S}_{K,i_0-1}
\]
and thus $Z(\ha_{i_0}) = \overline{S}_{K,i_0-1}$ 
(as topological spaces). Hence, if there is some point $x$ in $Z_K$ on which $h$ vanishes, then 
$x$ lies in $Y_K \cap \overline{S}_{K,i_0-1}$. By definition of $i_0$, $Y_K \cap \overline{S}_{K,i_0-1}=\emptyset$. 
Hence, $h\in H^0(Z_K, \omega^r)$ is a nowhere vanishing section and
%
thus, by \Cref{lemma: nowhere vanishing. injective}, $h$ is injective.  Since $S_K$ is proper, so is any closed subscheme. Hence, we apply \Cref{lemma: going-down} and \Cref{lemma: going-up} on the pair $(Z_K)_K\subset (Y_K)_K$ respectively $(Z_K)_K \subset (S_K)_K$ to obtain that
\[
M(Y_K) \leftrightsquigarrow_{\Hcal} M(Z_K) \leftrightsquigarrow_{\Hcal} M(S_K).
\]
\end{proof}
\begin{corollary}
For any two $\GG(\AA_f^p)$-equivariant systems of closed subschemes $(Z_K)_K,(Y_K)_K\subset (S_K)_K$ we have that $M(Y_K)\leftrightsquigarrow_{\Hcal} M(Z_K)$.
\end{corollary}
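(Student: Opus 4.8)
The plan is to deduce this directly from \Cref{theorem: main theorem.in.text} by applying it twice and chaining the resulting equivalences. Concretely, since $(\GG,\XX)$ is of compact-type (inherited from the hypothesis of \Cref{theorem: main theorem.in.text}), that theorem gives $M(W_K)\leftrightsquigarrow_{\Hcal} M(S_K)$ for \emph{every} $\GG(\AA_f^p)$-equivariant system of closed subschemes $(W_K)_K\subset (S_K)_K$. Taking $W_K=Y_K$ and then $W_K=Z_K$, I obtain $M(Y_K)\leftrightsquigarrow_{\Hcal} M(S_K)$ and $M(Z_K)\leftrightsquigarrow_{\Hcal} M(S_K)$.

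The only remaining point is transitivity of $\leftrightsquigarrow_{\Hcal}$. This is immediate from the definitions: $M(A)\rightsquigarrow_{\Hcal} M(B)$ says exactly that the set of systems of Hecke eigenvalues appearing in $M(A)$ is contained in the one for $M(B)$, and containments of sets compose; hence $\rightsquigarrow_{\Hcal}$ is transitive and its symmetrization $\leftrightsquigarrow_{\Hcal}$ is an equivalence relation. Chaining $M(Y_K)\leftrightsquigarrow_{\Hcal} M(S_K)\leftrightsquigarrow_{\Hcal} M(Z_K)$ then yields the assertion.

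I do not expect any genuine obstacle: the corollary is a purely formal consequence, with all the mathematical content residing in \Cref{theorem: main theorem.in.text} and the two lemmas feeding it (the going-down construction of a smooth closed Hecke-equivariant $Z_K$ carrying an injective Hecke-equivariant section of a power of $\omega$, together with \Cref{lemma: going-down} and \Cref{lemma: going-up}). The one subtlety worth recording in the write-up is that the compact-type hypothesis cannot be dropped here, since properness of $S_K$ — and hence of each of its closed subschemes — is what makes the Serre-vanishing steps of both lemmas available.
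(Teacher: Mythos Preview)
Your proposal is correct and matches the paper's own proof essentially verbatim: apply \Cref{theorem: main theorem.in.text} once to $(Y_K)_K$ and once to $(Z_K)_K$, then chain $M(Y_K)\leftrightsquigarrow_{\Hcal} M(S_K)\leftrightsquigarrow_{\Hcal} M(Z_K)$.
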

\begin{proof}
By \Cref{theorem: main theorem.in.text} applied to the two systems respectively we obtain that
\[
M(Y_K)\leftrightsquigarrow_{\Hcal} M(S_K) \leftrightsquigarrow_{\Hcal} M(Z_K).
\]
\end{proof}
\subsection{Results for nonclosed subschemes and noncompact Shimura varieties}\label{section: proof of thm B}
As far as we know, the theory of toroidal compactifications of abelian type Shimura varieties has not yet been developed. Hence, in this section we assume that if $(\GG,\XX)$ is not of compact-type, then it is of Hodge type.
\subsubsection{Compactifications}
Let $S_K^\Sigma$ denote a smooth toroidal compactification of $S_K$ \cite[Theorem 1]{pera-toroidal.compactifications}. Let $D=D_K^\Sigma$ denote the boundary divisor of $S_K^\Sigma$ relative $S_K$. 
By \cite[Theorem 6.2.1]{goldring.koskivirta-strata.hasse} the morphism $\zeta_K$ extends to a morphism
\[
\zeta_K^\Sigma\colon S_K^\Sigma \to \GZip^\mu
\]
which is smooth by \cite[Theorem 1.2]{andreatta-two.mod.p.period.maps}.

The Hecke-action extends to $S_K^\Sigma$ \cite[Section 8.1.5]{goldring.koskivirta-strata.hasse}, so we extend all notations from the previous sections to $S_K^\Sigma$. To ensure that $M(S_K)\leftrightsquigarrow_{\Hcal} M(S_K^\Sigma)$, we make the same assumption as in \cite{goldring.koskivirta-strata.hasse},\cite{terakado.yu-hecke.eigensystems}; if $(\GG,\XX)$ is neither of compact-type nor of PEL-type, then assume that there exists a $\GG(\AA_f^p)$-equivariant divisor $D'$ such that $D_{\red}'=D$ and such that $\omega^k(-D')$ is ample on $S_K^\Sigma$ for all $k$ large enough.
\begin{theorem}\label{theorem: general cases}
Let $Y_K\subset S_K$ be an arbitrary locally closed $\GG(\AA_f^p)$-equivariant subscheme. The following holds.
\begin{enumerate}
    \item\label{item: going.down.generally} We have that $M(S_K)\rightsquigarrow_{\Hcal} M(Y_K)$.
    \item\label{item: going.up.boundary.assumption} If furthermore $Y_K\subset S_K^\Sigma$ is closed, 
    then $M(Y_K)\rightsquigarrow_{\Hcal} M(S_K)$.
    \item\label{item: basic locus} If $Y_K$ meets the basic locus, then $M(S_K)\leftrightsquigarrow_{\Hcal} M(Y_K)$.
    \item\label{item: pullback from gzip} If $Y_K=\zeta_K^*\Ycal_K$ for a closed substack $\Ycal$ of $\GZip^\mu$, then $M(Y_K)\leftrightsquigarrow_{\Hcal} M(S_K)$.
    \item\label{item: nonvanishing locus} Suppose that $M(Y_K)\leftrightsquigarrow_{\Hcal} M(S_K)$ and that there exists a Hecke-equivariant line bundle $\Lscr$ and a Hecke-equivariant section $h\in H^0(Y_K,\Lscr)$. Then $M(Y_K)\leftrightsquigarrow_{\Hcal} M(Y_{K,h})$, where $Y_{K,h}$ denotes the nonvanishing locus of $h$.
\end{enumerate}
\end{theorem}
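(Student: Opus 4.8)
The plan is to push everything through the proper variety $S_K^\Sigma$ and to reuse the going‑down and going‑up lemmas (\Cref{lemma: going-down}, \Cref{lemma: going-up}) together with the construction in the proof of \Cref{theorem: main theorem.in.text}. First I would record that the latter proof used the compact‑type hypothesis only to make the ambient space proper: the length stratification and the length Hasse invariants of \Cref{lemma: length Hasse invariant} descend to $S_K^\Sigma$ along $\zeta_K^\Sigma$, and the two lemmas impose no properness assumption beyond that of the ambient space. Hence the same argument with $S_K^\Sigma$ in place of $S_K$ gives: for every $\GG(\AA_f^p)$‑equivariant closed subscheme $Z\subseteq S_K^\Sigma$ one has $M(Z)\leftrightsquigarrow_{\Hcal} M(S_K^\Sigma)$. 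Combining this with $M(S_K)\leftrightsquigarrow_{\Hcal} M(S_K^\Sigma)$, which holds by the boundary‑divisor hypothesis exactly as in \cite{goldring.koskivirta-strata.hasse},\cite{terakado.yu-hecke.eigensystems}, settles part \ref{item: going.up.boundary.assumption} immediately, since a $Y_K$ closed in $S_K^\Sigma$ then satisfies $M(Y_K)\leftrightsquigarrow_{\Hcal} M(S_K^\Sigma)\leftrightsquigarrow_{\Hcal} M(S_K)$.

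For part \ref{item: going.down.generally} I would take $\overline{Y}_K$ to be the scheme‑theoretic closure of $Y_K$ in $S_K^\Sigma$; it remains $\GG(\AA_f^p)$‑equivariant by flatness of the transition morphisms (as in the proof of \Cref{lemma: equivariant stuff}\,\ref{item: closures}). By the previous step $M(S_K)\leftrightsquigarrow_{\Hcal} M(\overline{Y}_K)$, so it suffices to prove $M(\overline{Y}_K)\rightsquigarrow_{\Hcal} M(Y_K)$. But $Y_K$ is schematically dense in $\overline{Y}_K$, so restriction along $Y_K\hookrightarrow\overline{Y}_K$ is injective on $H^0(-,\Vscr(\eta))$ for every $\eta$, whence every system of Hecke eigenvalues in $M(\overline{Y}_K)$ appears in $M(Y_K)$. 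Working with the scheme‑theoretic closure in this way avoids any reducedness hypothesis on $Y_K$.

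For parts \ref{item: basic locus} and \ref{item: pullback from gzip} the common mechanism is to produce inside $Y_K$ a nonempty $\GG(\AA_f^p)$‑equivariant closed subscheme $Z_K$ which is $0$‑dimensional and reduced — hence proper and smooth — and on which the length‑$0$ Hasse invariant $\ha_0$ of \Cref{lemma: length Hasse invariant} restricts to a nowhere‑vanishing, hence (by \Cref{lemma: nowhere vanishing. injective}) injective, Hecke‑equivariant section of some $\omega^r$; here $\ha_0$ is globally nonvanishing because $\overline{S}_{K,0}=S_{K,0}$. For part \ref{item: pullback from gzip} this is easy: a nonempty closed substack $\Ycal\subseteq\GZip^\mu$ is $E$‑invariant, hence set‑theoretically a union of Ekedahl-Oort strata, and since the zero‑dimensional stratum $[E\backslash G_e]$ lies in the closure of every stratum, $\Ycal\supseteq[E\backslash G_e]$, so $Y_K=\zeta_K^*\Ycal\supseteq S_{K,0}$ and one takes $Z_K=(S_{K,0})_{\red}$. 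For part \ref{item: basic locus} one argues that $\GG(\AA_f^p)$‑equivariance of $Y_K$, together with the structure of the basic locus, forces $Y_K$ to meet $S_{K,0}$, and then takes $Z_K=(Y_K\cap S_{K,0})_{\red}$. In either case \Cref{lemma: going-down} applied to $Z_K\subseteq Y_K$ gives $M(Y_K)\rightsquigarrow_{\Hcal} M(Z_K)$, and \Cref{lemma: going-up} applied to the closed immersion $Z_K\hookrightarrow S_K^\Sigma$ into a proper scheme gives $M(Z_K)\rightsquigarrow_{\Hcal} M(S_K^\Sigma)\leftrightsquigarrow_{\Hcal} M(S_K)$; combined with part \ref{item: going.down.generally}, this yields $M(Y_K)\leftrightsquigarrow_{\Hcal} M(S_K)$.

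For part \ref{item: nonvanishing locus}, $Y_{K,h}$ is an open $\GG(\AA_f^p)$‑equivariant subscheme of $Y_K$, hence a locally closed equivariant subscheme of $S_K$; one implication then follows from the hypothesis and part \ref{item: going.down.generally}, namely $M(Y_K)\rightsquigarrow_{\Hcal} M(S_K)\rightsquigarrow_{\Hcal} M(Y_{K,h})$. For the converse, given a Hecke eigenform $f\in H^0(Y_{K,h},\Vscr(\eta))$, one uses that $h$ trivialises $\Lscr$ over $Y_{K,h}$ and that $Y_K$ is noetherian, so that $H^0(Y_{K,h},\Vscr(\eta))=\varinjlim_a H^0(Y_K,\Vscr(\eta)\otimes\Lscr^{\otimes a})$ along multiplication by $h$; hence for $a\gg0$ the section $fh^a$ extends to a global section over $Y_K$, which is nonzero (it restricts to $fh^a\neq0$ on the dense open $Y_{K,h}$) and, $h$ being Hecke‑equivariant, is a Hecke eigenform with the same system of eigenvalues as $f$ — appearing in $M(Y_K)$ as at the end of the proof of \Cref{lemma: going-up}. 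Thus $M(Y_{K,h})\rightsquigarrow_{\Hcal} M(Y_K)$. Throughout, the hard part is the non‑properness of $Y_K$: this is exactly why part \ref{item: going.down.generally} gives only one implication in general, and why parts \ref{item: basic locus}--\ref{item: pullback from gzip} must first manufacture a genuinely proper — here $0$‑dimensional — smooth equivariant $Z_K$ before \Cref{lemma: going-down} and \Cref{lemma: going-up} can be invoked. The remaining technical points are the verbatim transfer of the proof of \Cref{theorem: main theorem.in.text} to $S_K^\Sigma$ (its length strata and Hasse invariants behaving there as on $S_K$), the claim in part \ref{item: basic locus} that equivariance and the basic‑locus condition force $Y_K\cap S_{K,0}\neq\emptyset$, and the compatibility with the Hecke action of the colimit identification in part \ref{item: nonvanishing locus}.
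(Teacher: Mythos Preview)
Your overall architecture for parts \ref{item: going.down.generally}, \ref{item: going.up.boundary.assumption}, \ref{item: pullback from gzip} and \ref{item: nonvanishing locus} matches the paper's: pass to the toroidal compactification $S_K^\Sigma$, re-run the construction of \Cref{theorem: main theorem.in.text} there, and use density of $Y_K$ in its closure for the injective restriction; for \ref{item: nonvanishing locus} the paper likewise extends $fh^n$ to $Y_K$ via \cite[Th.~9.3.1]{grothendieck-egaI}. One point you skate over and the paper makes explicit: on $S_K^\Sigma$ the Hodge bundle $\omega$ is not ample, so Serre vanishing in \Cref{lemma: going-down} and \Cref{lemma: going-up} must be run with the ample $\omega(-D')$. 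The paper handles part \ref{item: going.up.boundary.assumption} by observing that $Y_K\subset S_K$ does not meet the boundary, whence $\omega(-D')|_{Y_K}=\omega|_{Y_K}$ and the argument goes through unchanged; your blanket claim that $M(Z)\leftrightsquigarrow_{\Hcal} M(S_K^\Sigma)$ for \emph{every} closed $Z\subset S_K^\Sigma$ is stronger than what is needed and not quite what the argument gives.

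The real gap is in part \ref{item: basic locus}. Your reduction hinges on the assertion that Hecke-equivariance together with $Y_K$ meeting the basic locus forces $Y_K\cap S_{K,0}\neq\emptyset$, which you yourself flag as a ``remaining technical point''. But this is not a technicality---it is generally false. Take $x$ in the basic locus lying in an Ekedahl--Oort stratum $S_{K,w}$ with $w\neq e$ (such points exist whenever the basic locus has positive dimension, e.g.\ in the Siegel case for $g\ge 2$), and set $Y_K=C(x)$. Then $Y_K$ is $\GG(\AA_f^p)$-equivariant (\Cref{example: equiv systems}\ref{item: central leaves}), meets the basic locus, yet is entirely contained in $S_{K,w}$ and hence disjoint from $S_{K,0}$. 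The paper avoids this by a different choice of $Z_K$: rather than intersecting with $S_{K,0}$, it takes $Z_K=C(x)$ itself for any basic $x\in Y_K$. Because basic central leaves are $0$-dimensional, smooth, closed, and (being the prime-to-$p$ Hecke orbit of $x$) contained in the equivariant $Y_K$, the pair $C(x)\subset Y_K$ is exactly what \Cref{lemma: going-down} and \Cref{lemma: going-up} require, and one concludes via $M(Y_K)\rightsquigarrow_{\Hcal} M(C(x))\rightsquigarrow_{\Hcal} M(\overline{Y}_K)\rightsquigarrow_{\Hcal} M(Y_K)$ together with part \ref{item: going.down.generally}. Replacing your $Z_K$ by $C(x)$ fixes the argument with no further change to your outline.
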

\begin{proof}
Let $Y_K^\Sigma$ denote the closure of $Y_K$ in $S_K^\Sigma$. The arguments of \Cref{theorem: main theorem.in.text} applies verbatim to show that $M(S_K^\Sigma)\rightsquigarrow_{\Hcal} M(Y_K^\Sigma)$. By \cite[Theorem 11.1.1]{goldring.koskivirta-strata.hasse}, $M(S_K)\leftrightsquigarrow_{\Hcal} M(S_K^\Sigma)$, hence $M(S_K)\rightsquigarrow_{\Hcal} M(Y_K^\Sigma)$. Since $Y_K$ is open and dense in $Y_K^\Sigma$, $M(Y_K^\Sigma)\rightsquigarrow_{\Hcal} M(Y_K)$. 
Since $Y$ does not meet the boundary, 
the ample divisor $\omega(-D')$ identifies with $\omega$ when restricted to $Y$. Thus, if $Y$ is closed, then the arguments of \Cref{theorem: main theorem.in.text} applies. This shows \ref{item: going.down.generally} and \ref{item: going.up.boundary.assumption}.

For \ref{item: basic locus}, let $x\in Y_K$ be a point contained in the basic locus. Since $Y_K$ is Hecke-equivariant the central leaf $C(x)$ of $x$ is contained in $Y_K$. Since $x$ is contained in the basic locus, $C(x)$ is smooth, closed and zero dimensional. Hence, \ref{item: going.down.generally} and \ref{item: going.up.boundary.assumption} (or \cite[Theorem 5.4]{terakado.yu-hecke.eigensystems}) implies that $M(S_K)\leftrightsquigarrow_{\Hcal} M(C(x))$. 
By \Cref{lemma: going-down} respectively \Cref{lemma: going-up} we see that $M(Y_K)\rightsquigarrow_{\Hcal} M(C(x))\rightsquigarrow_{\Hcal} M(\overline{Y}_K)$. Since $Y$ is open and dense in $\overline{Y}$, $M(\overline{Y}_K)\rightsquigarrow_{\Hcal} M(Y_K)$ and the statement follows.

To prove \ref{item: pullback from gzip}, let $\Ycal \subset \GZip^{\mu}$ be a closed substack. Then there is some $w\in \iw$ such that the Ekedahl-Oort stratum $[E\backslash G_w]$ is contained in $\Ycal$. Since $[E\backslash \overline{G}_w]=\bigcup_{w'\preceq w}[E\backslash G_{w'}]$ and $\Ycal$ is closed, $\Ycal$ contains $[E\backslash G_e]$. Hence, $Y_K$ contains the smallest Ekedahl-Oort stratum, and the result follows from \ref{item: basic locus}.

Finally, we prove \ref{item: nonvanishing locus}. Since $Y_{K,h}$ is open in $Y$, the restrction map $H^0(Y_K,\Vscr(\eta))\to H^0(Y_{K,h}, \Vscr(\eta))$ is injective. This shows that $M(Y_K)\rightsquigarrow_{\Hcal} M(Y_{K,h})$. Conversely, suppose that $f\in H^0(Y_{K,h},\Vscr(\eta))$ is a Hecke eigenform. By \cite[Theorem 9.3.1]{grothendieck-egaI} there is some $n\geq 0$ such that $f\otimes h^n$ extends to $Y$. Since $\Lscr$ and $h$ are Hecke-equivariant, this shows that $M(Y_{K,h})\rightsquigarrow_{\Hcal} M(Y_K)$.
\end{proof}
\begin{corollary}\label{corollary: eo length strata and central leaves}
Let $Y_K$ be an Ekdahl-Oort stratum or a length stratum, and let $C(x)$ be a central leaf that is closed in the Ekedahl-Oort stratum containing it. We have that $M(Y_K)\leftrightsquigarrow_{\Hcal} M(S_K)$ and, if $S_K$ is compact, then $M(S_K) \leftrightsquigarrow_{\Hcal} M((C(x))$.
\end{corollary}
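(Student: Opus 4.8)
The plan is to deduce both statements directly from \Cref{theorem: general cases}, treating the three types of subscheme (Ekedahl--Oort stratum, length stratum, central leaf) in turn. First I would handle the Ekedahl--Oort and length strata. Each such stratum $Y_K$ is a locally closed $\GG(\AA_f^p)$-equivariant subscheme of $S_K$ that is the pullback along $\zeta_K$ of a locally closed substack of $\GZip^\mu$; passing to its closure inside $S_K$, one gets the pullback of the corresponding closed substack (a union of smaller strata). So $\overline{Y}_K = \zeta_K^*\Ycal_K$ for a closed substack $\Ycal_K\subseteq \GZip^\mu$, and part \ref{item: pullback from gzip} of \Cref{theorem: general cases} gives $M(\overline{Y}_K)\leftrightsquigarrow_{\Hcal} M(S_K)$. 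Since $Y_K$ is open and dense in $\overline{Y}_K$, restriction of sections gives an injection $M(\overline{Y}_K)\hookrightarrow M(Y_K)$, so $M(\overline{Y}_K)\rightsquigarrow_{\Hcal} M(Y_K)$; combined with part \ref{item: going.down.generally} ($M(S_K)\rightsquigarrow_{\Hcal} M(Y_K)$) and part \ref{item: going.up.boundary.assumption} applied to $\overline{Y}_K$, this pins down $M(Y_K)\leftrightsquigarrow_{\Hcal} M(S_K)$. One subtlety here is that part \ref{item: going.up.boundary.assumption} requires $\overline{Y}_K$ to be closed in the toroidal compactification $S_K^\Sigma$, equivalently that $\overline{Y}_K$ does not meet the boundary; since EO and length strata of $S_K^\Sigma$ restrict to those of $S_K$ (cf.\ the extension $\zeta_K^\Sigma$), one should instead take $\overline{Y}_K$ to be the closure inside $S_K^\Sigma$ and invoke the full strength of the theorem — but for EO/length strata one can alternatively appeal to part \ref{item: basic locus}, since every EO or length stratum's closure contains the smallest EO stratum, which lies in the basic locus.

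For the central leaf statement, suppose $S_K$ is compact (hence proper, so the boundary issue disappears and we may work with $S_K$ itself). Let $C(x)$ be a central leaf closed in the EO stratum $[E\backslash G_w]_K$ containing it. The idea is to produce a smooth, proper, zero-dimensional Hecke-equivariant closed subscheme inside $C(x)$ carrying an injective Hecke-equivariant section of $\omega^r$, then run \Cref{lemma: going-down} and \Cref{lemma: going-up}. Concretely: by part \ref{item: basic locus} applied to a point $x'$ in the basic locus lying in the EO stratum $[E\backslash G_e]_K$, or directly by \cite[Theorem 5.4]{terakado.yu-hecke.eigensystems}, we have $M(S_K)\leftrightsquigarrow_{\Hcal} M(C(x'))$ for a basic central leaf $C(x')$, which is smooth, closed and zero-dimensional. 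Now $C(x')$ is \emph{not} in general contained in $C(x)$, so instead I would argue: by \Cref{lemma: eo Hasse invariant} the EO strata Hasse invariant $\ha_w$ restricts to a nowhere-vanishing (hence, by \Cref{lemma: nowhere vanishing. injective}, injective) Hecke-equivariant section of $\omega^r$ on $[E\backslash G_w]_K$, and since $C(x)$ is closed in this EO stratum, $\ha_w$ restricts to an injective Hecke-equivariant section on $C(x)$. Because $S_K$ is proper, $C(x)$ — being closed in a locally closed subscheme whose boundary (the smaller strata) it avoids — is proper; and it is smooth. So \Cref{lemma: going-down} applied to $C(x)\subset S_K$ gives $M(S_K)\rightsquigarrow_{\Hcal} M(C(x))$, while \Cref{lemma: going-up} applied to $C(x)\subset S_K$ gives $M(C(x))\rightsquigarrow_{\Hcal} M(S_K)$, whence $M(S_K)\leftrightsquigarrow_{\Hcal} M(C(x))$.

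The main obstacle I anticipate is the properness of $C(x)$: central leaves are only locally closed in $S_{K,\overline\kappa}$, and although $C(x)$ is closed in its ambient EO stratum, I need it to be closed in $S_K$ (which is proper) in order to invoke Serre vanishing in Lemmas \ref{lemma: going-down} and \ref{lemma: going-up}. Here the hypothesis ``$C(x)$ closed in the EO stratum containing it'' is exactly what is needed: the EO stratum is locally closed with closure a union of \emph{strictly smaller} EO strata, and $C(x)$ being closed in the stratum means its closure in $S_K$ can only pick up points in those smaller strata — but a central leaf's closure stays within a single Newton stratum and, more to the point, the standard fact (which I would cite from the central-leaves literature) is that a central leaf closed in its EO stratum is in fact closed in $S_K$ when $S_K$ is proper. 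Modulo that input, the rest is a direct application of the machinery already assembled. Finally, to record the corollary as stated, I would note that the first assertion ($M(Y_K)\leftrightsquigarrow_{\Hcal} M(S_K)$ for EO or length strata) does \emph{not} require compactness of $S_K$ and follows from part \ref{item: basic locus} of \Cref{theorem: general cases} together with the EO/length closure relations \eqref{equation: closure length stratum}, since each such stratum meets (in its closure) the basic locus.
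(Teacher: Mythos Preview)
There are genuine gaps in both halves of your argument, and they share a common cause: you never invoke part~\ref{item: nonvanishing locus} of \Cref{theorem: general cases}, which is precisely the tool the paper uses to pass from a closed subscheme to an open piece cut out by a Hasse invariant.

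For the Ekedahl--Oort and length strata, you correctly obtain $M(\overline{Y}_K)\leftrightsquigarrow_{\Hcal} M(S_K)$ from part~\ref{item: pullback from gzip}, and the injection $M(\overline{Y}_K)\hookrightarrow M(Y_K)$ gives $M(\overline{Y}_K)\rightsquigarrow_{\Hcal} M(Y_K)$. But nothing in your argument yields the reverse direction $M(Y_K)\rightsquigarrow_{\Hcal} M(\overline{Y}_K)$: parts~\ref{item: going.down.generally} and~\ref{item: going.up.boundary.assumption} applied to $\overline{Y}_K$ only reproduce what you already have. Your fallback to part~\ref{item: basic locus} does not work either, since that part requires $Y_K$ \emph{itself} to meet the basic locus, not merely its closure; an arbitrary EO stratum (e.g.\ the ordinary one) need not contain any basic point. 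The paper instead observes that $Y_K$ is exactly the nonvanishing locus of the strata (or length) Hasse invariant on $\overline{Y}_K$, so part~\ref{item: nonvanishing locus} gives $M(\overline{Y}_K)\leftrightsquigarrow_{\Hcal} M(Y_K)$ directly.

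For the central leaf, the ``standard fact'' you would cite---that $C(x)$ closed in its EO stratum forces $C(x)$ closed in $S_K$---is false in general. Indeed, if $C(x)$ had positive dimension and were closed in the proper scheme $S_K$, then $\ha_w|_{C(x)}$ would be a nowhere-vanishing section of the ample line bundle $\omega^r$ on a proper positive-dimensional scheme, which is impossible. (Concretely: on the ordinary locus the central leaf is the entire EO stratum, closed in itself, but its closure is all of $S_K$.) The paper avoids this by taking $Z_K\coloneqq\overline{C(x)}$, applying \Cref{theorem: main theorem.in.text} to get $M(S_K)\leftrightsquigarrow_{\Hcal} M(Z_K)$, and then noting that since $C(x)$ is closed in $S_w$ one has $Z_K\cap S_w=C(x)$, so $\ha_w|_{Z_K}$ has nonvanishing locus exactly $C(x)$; part~\ref{item: nonvanishing locus} then gives $M(Z_K)\leftrightsquigarrow_{\Hcal} M(C(x))$.
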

\begin{proof}
Suppose first that $Y_K$ is an Ekedahl-Oort stratum or a length stratum. By \Cref{theorem: general cases}\ref{item: pullback from gzip} we have that $M(\overline{Y}_K)\leftrightsquigarrow_{\Hcal} M(S_K)$ (both for $S_K$ compact and noncompact). By the existence of Hasse invariants, the result follows from \Cref{theorem: general cases}\ref{item: nonvanishing locus}. 
Next let $Z_K$ denote the closure of $C(x)$ in $S_K$ and let $S_w$ be the Ekedahl-Oort stratum containing $C(x)$. By \Cref{theorem: main theorem.in.text} $M(S_K)\leftrightsquigarrow_{\Hcal} M(Z_K)$. Since $C(x)$ is closed in $S_w$, the Hasse invariant $\ha_w$ restricts to a section $h$ on $Z_K$ whose nonvanishing locus is exactly $C(x)$. Thus we apply \Cref{theorem: general cases}\ref{item: nonvanishing locus} again.
\end{proof}
\section*{}
\subsection*{Acknowledgements}
We thank Abhinandan, Wushi Goldring, Wansu Kim and Alex Youcis for helpful discussions. In particular, we thank Wushi Goldring for bringin Serre's Letter to Tate to our attention many years ago, for helpful comments on earlier drafts of this paper, and for encouraging this work. Part of this work was carried out while the author was a JSPS International Research Fellow.
\newpage
\AtNextBibliography{\small}
\printbibliography
\end{document}